\theoremstyle{plain}
\newtheorem{theorem}{Theorem}[section]
\newtheorem{lemma}[theorem]{Lemma}
\newtheorem{corollary}[theorem]{Corollary}
\newtheorem{proposition}[theorem]{Proposition}
\theoremstyle{remark}
\newtheorem{remark}[theorem]{Remark}
\theoremstyle{definition}
\newtheorem{definition}[theorem]{Definition}
\newcommand{\bbA}{\mathbb A}
\newcommand{\bbC}{\mathbb C}
\newcommand{\bbD}{\mathbb D}
\newcommand{\bbH}{\mathbb H}
\newcommand{\bbP}{\mathbb P}
\newcommand{\bbQ}{\mathbb Q}
\newcommand{\bbR}{\mathbb R}
\newcommand{\calC}{\mathcal C}
\newcommand{\calD}{\mathcal D}
\newcommand{\calE}{\mathcal E}
\newcommand{\calO}{\mathcal O}
\newcommand{\calX}{\mathcal X}
\newcommand{\calZ}{\mathcal Z}
\newcommand{\bfu}{\mathbf u}
\newcommand{\bfv}{\mathbf v}
\newcommand{\bfx}{\mathbf x}
\newcommand{\bfy}{\mathbf y}
\newcommand{\bftau}{\bm{\tau}}
\newcommand{\bflambda}{\bm{\lambda}}
\newcommand{\lie}[1]{ \mathfrak{#1}}
\newcommand{\psm}[1]{\left( \begin{smallmatrix}#1 \end{smallmatrix} \right)}
\newcommand{\ChowHat}[1]{\widehat{\mathrm{CH}}{}^{#1}}
\newcommand{\ddc} {\mathrm{dd^c}}
\newcommand{\rk}{ \mathrm{rk}}
\newcommand{\rank}{\mathrm{rank}}
\newcommand{\Id}{\mathrm{Id}}
\newcommand{\Dcur}{\calD_{\mathrm{cur}}}
\newcommand{\FJhat}{\widehat{\mathrm{FJ}}}
\newcommand{\can}{\mathrm{can}}
\DeclareMathOperator{\Sym}{Sym}
\DeclareMathOperator{\Sp}{Sp}
\DeclareMathOperator{\Sptilde}{\widetilde{Sp}}
\newcommand{\GL}{\mathrm{GL}}
\newcommand{\Mat}{\mathrm{Mat}}
\newcommand{\SL}{\mathrm{SL}}
\newcommand{\KM}{\mathrm{KM}}
\newcommand{\ev}{\mathrm{ev}}
\numberwithin{equation}{section}
\title{Arithmetic special cycles and Jacobi forms}
\date{}
\author{Siddarth Sankaran}
\begin{document}
	\maketitle
	
	\begin{abstract}
		We consider families of  special cycles, as introduced by Kudla, on  Shimura varieties attached to anisotropic quadratic spaces over totally real fields.  By augmenting these cycles with Green currents, we obtain classes in the arithmetic Chow groups of the canonical models of these Shimura varieties (viewed as arithmetic varieties over their reflex fields). The main result of this paper asserts that  generating series built from these cycles can be identified with the Fourier expansions of non-holomorphic Hilbert-Jacobi modular forms. This result provides evidence for an arithmetic analogue of Kudla's conjecture relating these cycles to Siegel modular forms. 
	\end{abstract}
	\tableofcontents
	
	\section{Introduction}
	
	The main result of this paper is a modularity result for certain generating series of ``special" cycles that live in the arithmetic Chow groups of Shimura varieties of orthogonal type. 
	
	We begin by introducing the main players. Let $F$ be a totally real extension of $\mathbb Q$  with $d = [F:\bbQ]$, and let $\sigma_1, \dots , \sigma_d$ denote the archimedean places of $F$. 
	Suppose $V$ is a quadratic space over $F$ that is of signature $((p,2), (p+2,0), (p+2,0), \cdots , (p+2,0))$  with $p>0$. In other words, we assume that $V \otimes_{F, \sigma_1} \bbR$ is a real quadratic space of signature $(p,2)$ and that $V$ is positive definite at all other real places. 
	
	\emph{We assume throughout that $V$ is anisotropic over F. } Note that  the signature condition guarantees that $V$ is anisotropic whenever $d > 1$.
	
	Let $H = \mathrm{Res}_{F/\bbQ} \mathrm{GSpin}(V)$. The corresponding Hermitian symmetric space $\bbD$ has two connected components; fix one component $\bbD^+$ and let $H^+(\bbR)$ denote its stabilizer in $H(\bbR)$. For a neat compact open subgroup $K_f \subset H(\bbA_f)$, let $\Gamma := H^+(\bbR) \cap H(\bbQ) \cap K_f$, and consider the quotient 
	\begin{equation}
	X(\bbC) \ := \ \Gamma \big\backslash \bbD^+.
	\end{equation}
	This space is a (connected) Shimura variety; in particular, it admits a canonical model $X$ over a  number field  $E \subset \bbC$ depending on $K_f$, see \cite{KudlaAlgCycles} for details. Moreover, as $V$ is anisotropic,   $X$ is a projective variety.
	
	Fix a $\Gamma$-invariant lattice $L \subset V$ such that the restriction of the bilinear form $\langle \cdot, \cdot, \rangle$ to $L$ is valued in $\calO_F$, and consider the dual lattice
	\begin{equation}
		L' = \{  \bfx \in L \ | \ \langle \bfx, L \rangle \subset \partial_F^{-1} \}
	\end{equation}
	where $\partial_F^{-1}$ is the inverse different.

 For an integer $n$ with $1 \leq n \leq p$,  let $S(V(\bbA_f)^n)$ denote the Schwartz space of compactly supported, locally constant functions on $V(\bbA_f)^n$, and consider the subspace
	\begin{equation} \label{eqn:intro S(L) defn}
			S(L^n) \ := \ \{  \varphi \in S(V(\bbA_f)^n)^{\Gamma} \ | \ \mathrm{supp}(\varphi) \subset (\widehat{L'})^n \text{ and }  \varphi(\bfx + l) = \varphi(\bfx) \text{ for  all } l \in L^n  \}.
	\end{equation}
	
	For every $T \in \Sym_n(F)$ and $\Gamma$-invariant Schwartz function $\varphi \in S(L^n)$, there is an $E$-rational  ``special" cycle
	\begin{equation}
	Z(T,\varphi)
	\end{equation}
	of codimension $n$ on $X$, defined originally by Kudla \cite{KudlaAlgCycles}; this construction is reviewed in  \Cref{sec:Prelim cycles} below. 
	
	It was conjectured by Kudla that these cycles are closely connected to automorphic forms; more precisely, he conjectured that upon passing to the Chow group of $X$, the generating series formed by the classes of these special cycles can be identified with the Fourier expansions of Hilbert-Siegel modular forms. When $F = \mathbb Q$,  the codimension one case of this conjecture follows from results of Borcherds \cite{BorcherdsGKZ}, and the conjecture for higher codimension was established by Zhang and Bruinier-Raum \cite{BruinierRaum,ZhangThesis};  when $F \neq \bbQ$, conditional proofs have been given by Yuan-Zhang-Zhang \cite{YuanZhangZhang} and Kudla \cite{KudlaTotallyReal}.
	
	More recently, attention has shifted to the  arithmetic analogues of this result, where one replaces the Chow groups with an ``arithmetic" counterpart, attached to a model $\calX$ of $X$ defined over a subring of the reflex field of $E$;  these arithmetic Chow groups were introduced by Gillet-Soul\'e \cite{GilletSouleIHES} and subsequently generalized by Burgos-Kramer-K\"uhn \cite{BurgosKramerKuhn}.  Roughly speaking, in this framework cycles are represented by pairs $(\calZ, g_{\calZ})$, where $\calZ$ is a cycle on $\calX$, and $g_{\calZ}$ is a \emph{Green object}, a purely differential-geometric datum that encodes  cohomological information about the archimedean fibres of $\calZ$. 
	
	In this paper, we consider the case where the model $\calX$ is taken to be $X$ itself. In order to promote the special cycles to the arithmetic setting, we need to choose the Green objects: for this, we employ the results of \cite{GarciaSankaran}, where a family $\{ \lie g(T,\varphi; \bfv)\}$ of Green forms was constructed. Note that these forms depend on an additional parameter $\bfv \in \Sym_{n}(F \otimes_{\bbQ} \bbR)_{\gg 0}$, which should be regarded as the imaginary part of a variable in the Hilbert-Siegel upper half space. 

	We thereby obtain classes 
	\begin{equation}
		\widehat Z(T,\bfv) \in \ChowHat{n}_{\bbC}(X) \otimes_{\bbC} S(L^n)^{\vee} ,
	\end{equation}
	where $\ChowHat{n}_{\bbC}(X) $ is the Gillet-Soul\'e arithmetic Chow group attached to $X$, by the formula
	\begin{equation}
	\widehat Z(T,\bfv)(\varphi) \ =\  \big(Z(T,\varphi), \, \lie g(T, \varphi; \bfv) \big)  \ \in \ \ChowHat{n}_{\bbC}(X).
	\end{equation}

	For reasons that will emerge in the course of the proof our main theorem, we will also need to consider a larger arithmetic Chow group $\ChowHat{n}_{\bbC}(X, \Dcur)$, constructed by  Burgos-Kramer-K\"uhn \cite{BurgosKramerKuhn}. This group appears as an example of their  general cohomological approach to the theory of Gillet-Soul\'e. There is a natural injective  map $\ChowHat{n}_{\bbC}(X) \hookrightarrow \ChowHat{n}_{\bbC}(X,\Dcur)$; abusing notation, we identify the special cycle $\widehat Z(T,\bfv)$ with its image under this map.
	\begin{theorem} \label{thm:Intro main theorem}
		\begin{enumerate}[(i)]
		\item 	Suppose $1 < n \leq p$. Fix $T_2 \in \Sym_{n-1}(F)$, and define the formal generating series
			\begin{equation} \label{eqn:main thm n}
				\FJhat_{T_2}(\bftau) \ = \ \sum_{T = \psm{ * & * \\ * & T_2}} \widehat Z(T,\bfv) \, q^T
			\end{equation}
			where $\bftau \in \bbH_n^d$ lies in the Hilbert-Siegel upper half space  of genus $n$, and $\bfv = \mathrm{Im}(\tau)$. Then $\FJhat_{T_2}(\bftau)$ is the $q$-expansion of a (non-holomorphic) Hilbert-Jacobi modular form of weight $p/2 + 1$ and index $T_2$, taking values in $\ChowHat{n}_{\bbC}(X,\Dcur) \otimes S(L^n)^{\vee}$ via the Weil representation. 
			\item When $n=1$, the generating series
				\begin{equation} \label{eqn:main thm genus 1}
					\widehat \phi_1(\bftau)  \ = \  \sum_{t \in F} \widehat Z(t,\bfv) q^t
				\end{equation}
				is the $q$-expansion of a (non-holomorphic) Hilbert modular form of weight $p/2+1$, valued in $\ChowHat{1}_{\bbC}(X) \otimes S(L)^{\vee}$. 
		\end{enumerate}
	\end{theorem}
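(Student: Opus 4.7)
The plan is to establish modularity by reducing, via the Burgos-Kramer-K\"uhn exact sequence for arithmetic Chow groups, to a compatible pair of modularity statements---one for the underlying geometric cycles in the Chow group and one for their associated archimedean Green objects---and then to treat the genus-$n$ Jacobi case by reducing to the genus-$1$ case of part (ii) on certain sub-Shimura varieties.

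For part (ii), I would invoke the exact sequence presenting $\ChowHat{1}_{\bbC}(X)$ as an extension of cycle classes in $\mathrm{CH}^1_{\bbC}(X)$ together with compatible closed $(1,1)$-forms, modulo a term of smooth functions. The modularity of the cycle classes $Z(t,\varphi)$ is classical, following from the work of Borcherds, Yuan-Zhang-Zhang, or Kudla in the totally real case. The corresponding differential forms $\omega(t,\varphi;v)$ assemble into a Kudla-Millson theta series, which is a Hilbert modular form of weight $p/2+1$ by the classical Kudla-Millson result. Matching the residual term in the kernel amounts to a direct computation with the $v$-dependence of the Green forms constructed in \cite{GarciaSankaran}, using that their $\ddc$-images reproduce the Kudla-Millson form.

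For part (i), the Jacobi reduction proceeds as follows. For fixed $T_2 \in \Sym_{n-1}(F)$ totally positive semi-definite, I would decompose $Z(T,\varphi)$ for $T = \psm{t_1 & t_{12} \\ t_{12}^t & T_2}$ according to $\Gamma$-orbits of tuples $\bfx_2 \in L^{n-1}$ whose moment matrix equals $T_2$; each orbit contributes a cycle supported on the sub-Shimura subvariety $X(\bfx_2) \subset X$ obtained by fixing $\bfx_2$. For each such $\bfx_2$, summing over the remaining entries $t_1, t_{12}$ with $T_2$ fixed should produce a codimension-one generating series on $X(\bfx_2)$ that, after identification with a genus-$1$ series on a smaller orthogonal Shimura variety, is a Hilbert modular form by part (ii). Pushing forward to $X$ and summing over orbits---weighted by the Schwartz-function data at the finite places and paired against the Weil representation on the orthogonal complement of $\bfx_2$---should then assemble into a Hilbert-Jacobi form of weight $p/2 + 1$ and index $T_2$.

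The main obstacle will be the compatibility of the Green currents $\lie g(T,\varphi;\bfv)$ of \cite{GarciaSankaran} under this Jacobi reduction: one would need to show that the star-product construction decomposes, modulo exact currents, as a classical Green current on $X(\bfx_2)$ in the $t_1$-variable times an archimedean factor transforming as a theta/Jacobi kernel in the remaining variables. Since restriction to $X(\bfx_2)$ generically produces currents with singularities rather than smooth forms, one must work in the larger group $\ChowHat{n}_{\bbC}(X, \Dcur)$---this is precisely why the theorem is stated there. A secondary technical point is the handling of degenerate strata (terms with $T$ not of full rank or $T_2$ not totally positive semi-definite), where the geometric cycle vanishes but archimedean Green contributions can persist; these require care with the analytic continuation inherent in the Green-current construction.
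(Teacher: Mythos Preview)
Your outline for part (i) is broadly aligned with the paper's strategy: decompose over $\Gamma$-orbits of $\bfy \in \Omega(T_2)$, push forward genus-one generating series from the sub-Shimura varieties $X_{\bfy}$, and weight by theta functions attached to the positive-definite spaces $U_{\bfy}$. However, the Green-current decomposition is not a single product as you describe. The paper's star-product formula (Section 4) yields \emph{two} terms modulo exact currents,
\[
\lie g^o(x_1,y) \equiv \lie g^o(x_1)\wedge \delta_{\bbD^+_y}\wedge\Omega^{n-1-r(y)} \ + \ \varphi_{\KM}^o(x_1)\wedge \lie g^o(y),
\]
giving a splitting $\widehat Z(T,\bfv)=\widehat A(T,\bfv)+\widehat B(T,\bfv)$. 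Only the $A$-series fits your ``genus-one on $X_{\bfy}$ times a theta factor'' picture. The $B$-series is a purely archimedean class whose modularity is \emph{not} inherited from genus one; it requires a separate direct verification (Theorem 5.1) that $\sum_T \lie b(T,\bfv)\,q^T$ transforms correctly under the generators of $\widetilde\Gamma^J$. Your sketch collapses these into one step and does not account for the $B$-term.

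For part (ii) there is a genuine gap. Knowing that the images of $\widehat Z(t,\bfv)$ in $\mathrm{CH}^1_{\bbC}(X)$ and in closed $(1,1)$-forms are separately modular does \emph{not} yield modularity in $\ChowHat{1}_{\bbC}(X)$: the exact sequence is not split, so one must exhibit a specific modular family in $\ChowHat{1}_{\bbC}(X)$ lifting both, and then show the difference with $\widehat Z(t,\bfv)$ is the coefficient family of a modular form valued in the kernel. Your ``direct computation with the $v$-dependence'' does not say what this lift is or how to control the difference. Moreover, for $F\neq\bbQ$ the modularity in $\mathrm{CH}^1_{\bbC}(X)$ you invoke is only known conditionally. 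The paper instead takes Bruinier's arithmetic cycles $\widehat Z_{\mathrm{Br}}(t)$ (built from his automorphic Green functions) as the comparison family---their modularity is unconditional---and then shows that $\lie g^o(t,\bfv)-\Phi_{\mathrm{Br}}(t)$ is smooth and assembles, via a spectral expansion in Laplace eigenfunctions on the compact $X$, into the $q$-coefficients of a genuine Hilbert modular form. This spectral argument is the substantive content of part (ii) and is absent from your proposal.
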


Some clarification is warranted in the interpretation of this theorem. The issue is that there  is no apparent topology on the arithmetic Chow groups for which the series \eqref{eqn:main thm n} and \eqref{eqn:main thm genus 1} can be said to converge in a reasonable sense; in a similar vein,  the Green forms $\lie g(T,\bfv)$ vary smoothly in the parameter $\bfv$, but there does not appear to be a natural way  in which the family of classes $\widehat Z(T,\bfv)$ can be said to vary smoothly. 
What is being asserted in the  theorem is the existence of: 
\begin{enumerate}[(i)]
	\item 	finitely many classes $\widehat Z_1, \dots \widehat Z_r \in \widehat{\mathrm{CH}}{}^n(X, \mathcal D_{\mathrm{cur}})$ (or in $\ChowHat{1}_{\bbC}(X)$ when $n=1$),
	\item finitely many Jacobi modular forms (in the usual sense) $f_1, \dots, f_r$,
	\item  and a  Jacobi form $g(\bftau)$ valued in the space of currents on $X$ that is locally uniformly bounded in $\bftau$,
\end{enumerate} 
such that	the $T$'th coefficient of the Jacobi form $
\sum_i f_i(\bftau) \widehat Z_i + a( g(\bftau)) $
coincides with $\widehat Z(T,\bfv)$. Here $a(g(\bftau)) \in \widehat{\mathrm{CH}}{}^n(X, \mathcal D_{\mathrm{cur}})$ is an ``archimedean class" associated to the current $g(\bftau)$. A more precise account may be found in \Cref{sec:modularity def}. 

To prove the theorem, we first prove the $n=1$ case, using a modularity result due to Bruinier \cite{Bruinier-totally-real} that involves a different set of Green functions; the theorem in this case follows from a comparison between his Green functions and ours. 

For $n>1$, we exhibit a decomposition
\begin{equation} \label{eqn:intro Zhat decomp}
\widehat Z(T,\bfv) = \widehat A(T,\bfv) + \widehat B(T,\bfv)
\end{equation}
in $\ChowHat{n}(X,\Dcur) \otimes S(L^n)^{\vee}$; this decomposition is based on a mild generalization of the star product formula \cite[Theorem 4.10]{GarciaSankaran}. The main theorem then  follows from the modularity of the series
\begin{equation}
\widehat \phi_A(\bftau) :=  \sum_{T = \psm{ * & * \\ * & T_2}} \widehat A(T,\bfv) \, q^T \qquad \text{and} \qquad \widehat \phi_B(\bftau) \ = \ \sum_{T = \psm{ * & * \\ * & T_2}} \widehat B(T,\bfv) \, q^T ,
\end{equation}
which are proved in \Cref{thm:A series} and \Cref{thm:B series} respectively.  The classes $\widehat A(T,\bfv)$ are expressed as linear combinations of pushforwards of special cycles along sub-Shimura varieties of $X$, weighted by the Fourier coefficients of classical theta series; the modularity of $\widehat \phi_A(\bftau)$ follows from this description and the $n=1$ case. 
The classes $\widehat B(T,\bfv)$ are purely archimedean, and the modularity of $\widehat \phi_B(\bftau)$ follows from an explicit computation involving the Kudla-Millson Schwartz form, \cite{KudlaMillsonIHES}.

This result provides evidence for the arithmetic version of  Kudla's conjecture, namely that the generating series
\begin{equation}  
		\widehat \phi_n(\bftau) \ = \ \sum_{T \in \Sym_n(F)} \, \widehat Z(T,\bfv) \, q^T
\end{equation}
is a Hilbert-Siegel modular form; indeed, the series $\FJhat_{T_2}(\bftau)$ is a formal Fourier-Jacobi coefficient of $\widehat\phi_n(\bftau)$. Unfortunately, there does not seem to be an obvious path by which one can infer the more general result from the results in this paper; the decomposition $\widehat Z(T,\bfv)$ depends on the lower-right matrix $T_2$, and it is not clear how to compare the decompositions for various $T_2$.

	\subsection*{Acknowledgements}
	The impetus for this paper emerged from discussions during an AIM SQuaRE workshop; I'd like to thank the participants -- Jan Bruinier, Stephan Ehlen, Stephen Kudla and Tonghai Yang -- for the stimulating discussion and insightful remarks, and AIM for the hospitality. I'd also like to thank Craig Cowan for a helpful discussion on the theory of currents. This work was partially supported by an NSERC Discovery grant.

	
\section{Preliminaries} \label{sec:Prelims}
	
		\subsection{Notation}
			\begin{itemize}
				\item Throughout, we fix a totally real field $F$ with $[F:\bbQ]=d$. Let $\sigma_1 , \dots, \sigma_d$ denote the real embeddings. Using these embeddings, we identify $F\otimes_{\bbQ} \bbR$ with $\mathbb R^d$, and denote by $\sigma_i(\mathbf t)$ the $i$'th component of $\mathbf t \in F \otimes_{\bbQ}\bbR$ under this identification.
				\item For any matrix $A $, we denote the transpose by $A'$.
				\item If $A \in \Mat_{n}(F \otimes_{\bbQ}\bbR)$, we write
					\begin{equation}
						e(A) \ := \  \prod_{i=1}^d \, \exp\big(  2 \pi i\, \mathrm{tr} \left( \sigma_i(A)  \right)\big)
					\end{equation}
					\item If $(V,Q)$ is a quadratic space over $F$, let $\langle \bfx, \bfy \rangle$ denote the corresponding bilinear form. Here we take the convention $Q(\bfx) = \langle \bfx, \bfx \rangle$. If $\bfx \in V$ and $\bfy = (\bfy_1, \dots , \bfy_{n}) \in V^n$ , we set $\langle \bfx, \bfy \rangle  = ( \langle    \bfx, \bfy_1 \rangle , \dots, \langle \bfx, \bfy_{n} \rangle ) \in \Mat_{1,n}(F)$. 
					
					\item For $i = 1, \dots d$, we set $V_i = V \otimes_{F,\sigma_i} \bbR$. 
					
					\item 	Let 
					\begin{equation}
					\bbH_n^d = \{ \bftau = \bfu + i\bfv \in \Sym_n(F \otimes_{\bbQ} \bbR) \ | \ \bfv \gg 0 \}
					\end{equation} 
					denote the Hilbert-Siegel upper half-space of genus $n$ attached to $F$. Via the fixed embeddings $\sigma_1, \dots, \sigma_d$, we may identify $\Sym_n(F \otimes \bbR) \simeq \Sym_n(\bbR)^d$; we let $\sigma_i(\bftau) = \sigma_i(\bfu) + i \sigma_i(\bfv)$ denote the corresponding component, so that, in particular, $\sigma_i(\bfv) \in \Sym_n(\bbR)_{>0}$ for $i=1,\dots, d$. 
					
					If $\bftau \in \bbH_n^d$ and $T \in \Sym_n(F)$,  we write
						\begin{equation}
							q^T  = e (\bftau T). 
						\end{equation}
					
			\end{itemize}

		\subsection{Arithmetic Chow groups}
		In this section, we recall the theory of arithmetic Chow groups $\ChowHat{n}_{\bbC}(X)$, as conceived by Gillet and Soul\'e \cite{GilletSouleIHES}; note here and throughout this paper, we work with complex coefficients. Recall that $X$ is defined over a number field $E$ endowed with a fixed complex embedding $\sigma: E \to \bbC$. We view $X$ as an arithmetic variety over the ``arithmetic ring" $(E, \sigma, \text{complex conjugation})$ in the terminology of \cite[\S 3.1.1]{GilletSouleIHES}.

		An \emph{arithmetic cycle} is a pair $(Z,g)$, where $Z$ is a formal $\bbC$-linear combination of codimension $n$ subvarieties of $X$, and $g$ is a \emph{Green current} for $Z$; more precisely, $g$ is a current of degree $(p-1,p-1)$  on $X(\bbC)$ such Green's equation 
		\begin{equation} \label{eqn:ChowHat Green}
			\ddc g \ + \ \delta_{Z(\bbC)} = \omega
		\end{equation}
		holds, where the right hand side is the current defined by integration\footnote{Here and throughout this paper, we will abuse notation and write $\omega$ both for the form and the current it defines.} against some smooth form $\omega$. Given a codimension $n-1$ subvariety $Y$ and a rational function $f \in k(Y)^{\times}$ on $Y$, let
		\begin{equation}
		\widehat{\mathrm{div}} (f)  := (\mathrm{div}(f), - \log | f |^2 \, \delta_Y)
		\end{equation}
		denote the corresponding principal arithmetic divisor. The arithmetic Chow group $\ChowHat{n}_{\bbC}(X)$ is quotient of the space of arithmetic cycles by the subspace spanned by  (a) the principal arithmetic divisors and (b) classes of the form $(0, \eta)$ with $\eta \in \mathrm{im}(\partial) + \mathrm{im}(\overline \partial)$. For more details, see \cite{GilletSouleIHES,SouleBook}

		In their paper \cite{BurgosKramerKuhn}, Burgos, Kramer and K\"uhn give an abstract reformulation and generalization of this theory: their main results describe the construction of an arithmetic Chow group $\ChowHat{*}(X, \calC)$ attached to a ``Gillet complex" $\calC$. One of the examples they describe is the group attached to the complex of currents $\calD_{\mathrm{cur}}$; we will content ourselves with the superficial description of this group given below, which will suffice for our purposes, and the reader is invited to consult \cite[\S 6.2]{BurgosKramerKuhn} for a thorough treatment.
		
		Unwinding the formal definitions in \cite{BurgosKramerKuhn}, one finds that classes in $\ChowHat{n}_{\bbC}(X, \Dcur)$ are represented by tuples $(Z, [T, g])$, with $Z$ as before, but now  $T$ and $g$ are currents of degree $(n,n)$ and $(n-1, n-1)$ respectively such that\footnote{The reader is cautioned that in \cite{BurgosKramerKuhn}, the authors normalize delta currents and currents defined via integration by powers of $2 \pi i$, resulting in formulas that look slightly different from those presented here; because we are working with $\bbC$-coefficients, the formulations are equivalent.}
		\begin{equation}
			\ddc g + \delta_{Z(\bbC)}  = T + \ddc(\eta)
		\end{equation}
		for some current $\eta$ with support contained in $Z(\bbC)$;
		we can view this as a relaxation of the condition that the right hand side of \eqref{eqn:ChowHat Green} is smooth. A nice consequence of this description is that any codimension $n$ cycle $Z$ on $X$ gives rise to a \emph{canonical class} (see \cite[Definition 6.37]{BurgosKramerKuhn})
		\begin{equation}
			\widehat Z^{\can} \ := \ \left( Z, [\delta_Z, 0] \right).
		\end{equation}

		By \cite[Theorem 6.35]{BurgosKramerKuhn}, the natural map 
		\begin{equation}
			\ChowHat{n}_{\bbC}(X) \to \ChowHat{n}_{\bbC}(X,\Dcur), \qquad  (Z,g) \mapsto (Z, [\omega, g])
		\end{equation}   is injective. Moreover while $\ChowHat{*}_{\bbC}(X, \Dcur)$ is not a ring in general, it is a module over $\ChowHat{*}_{\bbC}(X)$.  As a special case of this product, let $(Z,g) \in \ChowHat{1}_{\bbC}(X)$ be an arithmetic divisor, where  $g$ is a Green function with logarithmic singularities along the divisor $Z$. Suppose $\widehat Y^{\can} \in \ChowHat{m}(X, \Dcur)$ is the canonical class attached to a cycle $Y$ that intersects $Z$ properly; then by inspecting the proofs of \cite[Theorem 6.23, Proposition 6.32]{BurgosKramerKuhn} we find
		\begin{equation}
			(Z,g) \cdot \widehat Y^{\can} \ = \  \left(Z \cdot Y, [\omega \wedge \delta_{Y(\bbC)}, g \wedge \delta_{Y(\bbC)}] \right)  \in \ChowHat{m+1}_{\bbC}(X,\Dcur).
		\end{equation}

		\begin{remark} \label{remark:Chow vanishing}
			One consequence of our setup is the vanishing of certain ``archimedean rational" classes in $\ChowHat{n}(X)$ and $\ChowHat{n}(X, \Dcur)$. More precisely, if  $Y$ is a codimension $n-1$ subvariety, then 
			\begin{equation}
			(0, \delta_{Y(\bbC)}) = 0  \ \in \ \ChowHat{n}_{\bbC}(X).
			\end{equation}
			To see this, let $c \in \bbQ $  be any rational number such that $c \neq 0$ or $1$, and view $c$ as a rational function on $Y$; its divisor is trivial, and so
			\begin{equation}
				0 = \widehat{\mathrm{div}}(c) \ = \ (0, - \log|c|^2 \delta_{Y(\bbC)}) \ = \ - \log|c|^2  \cdot (0, \delta_{Y(\bbC)}).
			\end{equation}
			and hence $(0, \delta_{Y(\bbC)}) = 0$.  As a special case, we have $(0,1) = 0 \in \ChowHat{1}_{\bbC}(X)$. 
		\end{remark}

		\subsection{Special cycles} \label{sec:Prelim cycles}
		Here we review Kudla's construction of the family $\{Z(T)\}$ of special cycles on $X$, \cite{KudlaAlgCycles}.
		First, recall that the symmetric space $\bbD$ has a concrete realization 
		\begin{equation} \label{eqn:symmetric space proj model}
			\bbD = \{ z \in \bbP^1(V\otimes_{\sigma_1, F} \bbC) \ | \ \langle z, z \rangle = 0, \langle z , \overline z \rangle < 0 \}
		\end{equation}
		where $\langle \cdot, \cdot\rangle$ is the $\bbC$-bilinear extension of the bilinear form on $V$; the two connected components of $\bbD$ are interchanged by conjugation.
		
		Given a collection of vectors $\bfx = (\bfx_1, \dots, \bfx_n) \in V^n$, let
		\begin{equation}
			\bbD^+_{\bfx} \ := \ \{  z \in \bbD^+  \ | \ z \perp \sigma_1(\bfx_i) \ \text{ for } i = 1, \dots, n\}.
		\end{equation} 
		where, abusing notation, we denote by $\sigma_1 \colon V \to V_1 = V \otimes_{F, \sigma_1} \bbR$ the map induced by inclusion in the first factor. 
		
		Let $\Gamma_{\bfx}$ denote the pointwise stabilizer of $\bfx$ in $\Gamma$; then the inclusion $\bbD^+_{\bfx} \subset \bbD^+$ induces a map
		\begin{equation}
		 \Gamma_{\bfx} \big\backslash \bbD^+_{\bfx} \to \Gamma \big\backslash \bbD^+ = X,
		\end{equation}
		which defines a complex algebraic cycle that we denote $Z(\bfx)$. 
		If the span of $\{ \bfx_1, \dots, \bfx_r \}$ is not totally positive definite, then $\bbD^+_{\bfx}= \emptyset$ and $Z(\bfx) = 0$; otherwise, the codimension of $Z(\bfx)$ is the dimension of this span.
		
		Now suppose $T \in \Sym_n(F)$ and $\varphi \in S(L^n)$, and set
		\begin{equation}
			Z(T, \varphi)^{\natural} \ := \ \sum_{\substack{ \bfx \in \Omega(T) \\ \text{mod } \Gamma}}  \varphi(\bfx) \cdot Z(\bfx), 
		\end{equation}
		where 
		\begin{equation}
			\Omega(T) \ := \  \{ \bfx = (\bfx_1, \dots, \bfx_n)\in V^n \ | \   \langle \bfx_i, \bfx_j \rangle = T_{ij} \} .
		\end{equation}
		This cycle is rational over $E$. If $Z(T,\varphi)^{\natural} \neq 0$, then $T$ is necessarily totally positive semidefinite, and in this case $Z(T,\varphi)^{\natural}$ has codimension equal to the rank of $T$. 
		
		Finally, we define a $S(L^n)^{\vee}$-valued cycle $Z(T)^{\natural}$ by the rule 
		\begin{equation}
		Z(T)^{\natural} \colon \varphi \mapsto Z(T, \varphi)^{\natural}.
		\end{equation}
		for $\varphi \in S(L^n)$.

	\subsection{The cotautological bundle}

	Let $\calE \to X $ denote the tautological bundle: over the complex points $X(\bbC) = \Gamma \backslash \bbD^+$, the fibre $\calE_z$   at a point $z \in \bbD^+$ is simply the line corresponding to $z$ in the model \eqref{eqn:symmetric space proj model}. There is a natural Hermitian metric $\| \cdot \|^2_{\calE}$ on $\calE(\bbC)$, defined at a point  $z \in \bbD^+$ by the  formula 
$	\| v_z \|^2_{\calE,z} \ = \ - \langle v_z, v_z \rangle$ for $ v_z \in z$.  

Consider the arithmetic class
	\begin{equation}
		\widehat \omega  = - \widehat c_1(\calE, \| \cdot \|_{\calE}) \ \in \ \ChowHat{1}_{\bbC}(X);
	\end{equation}
	concretely, $\widehat\omega = - (\mathrm{div} s, - \log \| s\|_{\calE}^2 ) $, where $s$ is any meromorphic section of $\calE$. Finally, for future use, we set
	\begin{equation}
	\Omega :=   -  c_1(\calE, \| \cdot \|_{\calE}) \in A^{1,1}(X(\bbC))
	\end{equation}
	where  $-\Omega = c_1(\calE, \| \cdot \|_{\calE}) $ is the first Chern form attached to $(\calE, \|\cdot \|_{\calE})$; here the Chern form is normalized as in \cite[\S 4.2]{SouleBook}. Note that $-\Omega$ is a K\"ahler form, cf.\ \cite[\S 2.2]{GarciaSankaran}.
	\begin{remark}
			Elsewhere in the literature, one often finds a different normalization (i.e.\ an overall multiplicative constant) for the metric $\| \cdot \|_{\calE}$ that is better suited to certain arithmetic applications; for example, see  \cite[\S 3.3]{KRYbook}. In our setting, however,  \Cref{remark:Chow vanishing} implies that rescaling the metric does not change the Chern class in $\ChowHat{1}_{\bbC}(X)$. 
	\end{remark}

		\subsection{Green forms and arithmetic cycles} \label{sec:prelim arith cycles}
		In this section, we  sketch the construction of a family of Green forms for the special cycles, following \cite{GarciaSankaran}.
		
		We begin by recalling that for any tuple $x = (x_1, \dots x_n) \in V_1^n = (V \otimes_{\sigma_1, F} \bbR)^n$, Kudla and Millson (see \cite{KudlaMillsonIHES}) have defined a Schwartz form $\varphi_{\KM}(x)$,  which is valued in the space of closed $(n,n)$ forms on $\bbD^+$, and is of exponential decay in $x$. Let $T(x) \in \Sym_{n}(\bbR)$ denote the matrix of inner products, i.e.\ $T(x)_{ij} = \langle x_i, x_j \rangle$, and consider the normalized form
		\begin{equation} \label{eqn:PhiKM normalized}
			\varphi^o_{\KM}(x) \ := \ \varphi_{\KM}(x) \, e^{2 \pi \mathrm{tr} T(x) } .
		\end{equation}

		In \cite[\S 2.2]{GarciaSankaran},  another form  $\nu^o(x)$, valued in the space of smooth $(n-1,n-1)$ forms on  $\bbD^+$ \, is defined (this form is denoted by $\nu^o(x)_{[2n-2]}$ there). It satisfies the relation
		\begin{equation} \label{eqn:nu transgression}
		 	\ddc	\nu^o( \sqrt{u} x) \ = \ - u \frac{\partial}{\partial u}  \varphi_{\KM}^o(\sqrt{u} x), \qquad u \in \bbR_{>0}.
		\end{equation}
		
		For a complex parameter $\rho \gg 0$, let 
		\begin{equation}
			\lie g^o(x;\rho) \ := \ \int_1^{\infty} \nu^o(\sqrt u x) \frac{du}{u^{1+\rho}};
		\end{equation}
		then $\lie g^o(x, \rho)$ defines a smooth form for $Re(\rho) \gg 0$. The corresponding current admits a meromorphic continuation to a neighbourhood of $\rho=0$ and we set
		\begin{equation}
			\lie g^o(x) := \mathop{CT}_{\rho=0} \, \lie g^o(x;\rho).
		\end{equation}
		Note that, for example, 
		\begin{equation} \label{eqn:green current zero}
			\lie g^o(0) \ = \ \nu^o(0) \, \mathop{CT}_{\rho = 0} \int_1^{\infty}  \frac{du}{u^{1+\rho}} = 0. 
		\end{equation}
		
		In general, the current $\lie g^o(x)$ satisfies the equation 
			\begin{equation} \label{eqn:Green equation for g}
				\ddc \lie g^o(x) + \delta_{\bbD^+_{\bfx}} \wedge \Omega^{n - r(x)} = \varphi^o_{\KM}(x)			\end{equation}
		where $r(x) = \dim\mathrm{span}(x) = \dim\mathrm{span}(x_1, \dots, x_n)$; for details regarding all these facts, see \cite[\S 2.6]{GarciaSankaran}.

		Now suppose $T \in \Sym_n(F)$. Following \cite[\S4]{GarciaSankaran}, we define an $S(L^n)^{\vee}$-valued current $\lie g^o(T,\bfv)$, depending on a parameter $\bfv \in \Sym_n(F\otimes_{\bbQ} \bbR)_{\gg 0}$, as follows: let $v = \sigma_1(\bfv)$ and choose any matrix $a \in \GL_n(\bbR)$ such that $v = a a'$. Then $\lie g^o(T,\bfv)$ is defined by the formula
		\begin{equation}
			\lie g^o(T, \bfv)(\varphi) \ := \ \sum_{\bfx \in \Omega(T)}  \varphi(\bfx) \  \lie g^o \left( \sigma_1(\bfx)a \right),  
		\end{equation}
		where $ \sigma_1(\bfx) \in V_1^n$; by \cite[Proposition 2.12]{GarciaSankaran}, this is independent of the choice of $a \in \GL_n(\bbR)$. 
			Note that $\lie g^o(T, \bfv)$ is a $\Gamma$-invariant current on $\bbD^+$ and hence descends to $X(\bbC) = \Gamma \backslash \bbD^+$. 
		
		Next, consider the $S(L^n)^{\vee}$-valued differential form $\omega(T,\bfv)$, defined by the formula
		\begin{equation} \label{eqn:KM theta q coeff}
			\omega(T,\bfv)(\varphi)  \ := \ \sum_{\bfx \in \Omega(T)}  \varphi(x) \, \varphi_{\KM}^o( \sigma_1(\bfx) a), \qquad \sigma_1(\bfv) = aa',
		\end{equation}
		and which  is a $q$-coefficient of the \emph{Kudla-Millson theta series}
		\begin{equation} \label{eqn:Kudla Millson theta}
			\Theta_{\KM}(\bftau) \ = \ \sum_{T \in \Sym_n(F)} \, \omega(T,\bfv) \, q^T, \qquad 
		\end{equation}
		where $ \tau \in \bbH^d_n$, and $ \bfv = \mathrm{Im}(\bftau)$. We then have the equation of currents 
		\begin{equation} \label{eqn:global green eqn}
			\ddc \lie g^o(T,\bfv) \ + \ \delta_{Z(T)(\bbC)} \wedge \Omega^{n - \mathrm{rank}T} \ = \  \omega(T,\bfv)
		\end{equation}
		on $X$,  see \cite[Proposition 4.4]{GarciaSankaran}
		
		In particular, if $T$ is non-degenerate, then $\rank(T) = n$ and  $\lie g^o(T,\bfv)$ is a Green current  for the cycle $Z(T)^{\natural}$; in this case, we obtain an arithmetic special cycle
		\begin{equation}
			\widehat Z(T, \bfv) := (Z(T)^{\natural}, \lie g^o(T,\bfv) ) \ \in \ \ChowHat{n}_{\bbC}(X) \otimes_{\bbC} S(L^n)^{\vee}.
		\end{equation}
		
			Now suppose $T \in \Sym_n(F)$ is arbitrary,  let $r = \rank(T)$, and fix $\varphi \in S(L^n)$.
			 We may choose a  pair $(Z_0, g_0)$ representing the class $\widehat\omega^{n-r} \in \ChowHat{n}_{\bbC}(X)$,
		such that $Z_0$ intersects $Z(T, \varphi)^{\natural}$ properly and $g_0$ has logarithmic type \cite[\S II.2]{SouleBook}. We then define
		\begin{equation}
			\widehat Z(T, \bfv, \varphi) := \left(Z(T, \varphi)^{\natural} \cdot Z_0, \ \lie g^o(T,\bfv, \varphi) + g_0 \wedge \delta_{Z(T,\varphi)(\bbC)} \right) \ \in \ \ChowHat{n}_{\bbC}(X).
		\end{equation}
		The reader may consult \cite[\S 5.4]{GarciaSankaran} for more detail on this construction, including the fact that it is independent of the choice of $(Z_0, g_0)$. 
		
		Finally, we define  a class $\widehat Z(T,\bfv) \in \ChowHat{n}_{\bbC}(X) \otimes S(L^n)^{\vee}$ by the rule
		\begin{equation}
			\widehat Z(T,\bfv)(\varphi) = \widehat Z(T, \bfv, \varphi).
		\end{equation}
				\begin{remark}
			In \cite{GarciaSankaran}, the Green current $\lie g^o(T,\bfv)$ is augmented by an additional term, depending on $\log(\det \bfv)$, when $T$ is degenerate see \cite[Definition 4.5]{GarciaSankaran}. This term was essential in establishing the archimedean arithmetic Siegel-Weil formula in the degenerate case; however, in the setting of the present paper, \Cref{remark:Chow vanishing} implies that  this additional term vanishes upon passing to $\ChowHat{n}_{\bbC}(X)$, and can be omitted from the  discussion without consequence. In particular, according to our definitions, we have 
			\begin{equation}
				\widehat Z(0_n, \bfv)(\varphi) = \varphi(0) \cdot \widehat\omega^{n}.
			\end{equation}
		\end{remark}
	
		\subsection{Hilbert-Jacobi modular forms} \label{sec:modularity def}
	In this section, we briefly review the basic definitions of vector-valued (Hilbert) Jacobi modular forms, mainly to fix notions.  For convenience, we work in ``classical" coordinates and only with parallel scalar weight. Throughout, we fix an integer $n \geq 1.$

	
	We begin by briefly recalling the theory of metaplectic groups and the Weil representation; a convenient summary for the facts mentioned here, in a form useful to us,  is \cite[\S 2]{JiangSoudryGenericityII}. For a place $v \leq \infty$, let $\Sptilde_n(F_v)$ denote the metaplectic group, a two-fold cover of $\Sp_n(F_v)$; as a set, $\Sptilde_n(F_v) = \Sp_n(F_v) \times \{\pm 1\}$.
		When $F_v = \bbR$, the group $\Sptilde_n(\bbR)$ is isomorphic to the group of pairs $(g, \phi)$, where $g = \psm{A & B \\ C & D} \in \Sp_n(\bbR)$ and $\phi \colon \bbH_n \to \bbC$ is a function such that $\phi(\tau)^2 = \det (C \tau + D)$; in this  model, multiplication is given by
			\begin{equation}
		(g, \phi(\tau)) \cdot (g', \phi'(\tau)) = (gg',  \phi(g' \tau)  \phi'(\tau)  ).
		\end{equation}
		
	At a non-dyadic finite place, there exists a canonical embedding $\Sp_n(\calO_v) \to \Sptilde_n(F_v)$. Consider the restricted product $\prod'_{v \leq \infty} \Sptilde_n(F_v)$ with respect to these embeddings; the global double cover  $\Sptilde_{n,\bbA}$   of $\Sp_n(\bbA)$ is the quotient $\prod'_{v \leq \infty} \Sptilde_n(F_v)/ I$ of this restricted direct product by the subgroup 
	\begin{equation}
	 I := \{ (1, \epsilon_v)_{v \leq \infty} \, | \, \prod_v \epsilon_v = 1, \, \epsilon_v = 1 \text{ for almost all } v  \}.
	 \end{equation}
	Moreover, there is a  splitting  		
		\begin{equation}\label{eqn:metaplectic canonical splitting}
		\iota_F \colon \Sp_n(F) \hookrightarrow \Sptilde_{n,\bbA}, \qquad \gamma \mapsto \prod_v (\gamma, 1)_v \cdot I.
		\end{equation}	
	
	Let $\widetilde \Gamma'$ denote the full inverse image of $\Sp_{n}(\calO_F)$ under the covering map $\prod_{v | \infty} \Sptilde_n(F_v) \to \Sp_n(F \otimes_{\bbQ} \bbR)$. We obtain an action $\rho$ of $\widetilde \Gamma'$ on the space $ S(V(\bbA_f)^n)$ as follows. Let $\omega $ denote the\footnote{Here we take the Weil representation for the standard additive character $\psi_F \colon \bbA_F / F \to \bbC$, which we suppress from the notation.} Weil representation  of $\Sptilde_{n,\bbA}$ on $S(V(\bbA)^n)$. Given $\widetilde \gamma \in \widetilde \Gamma'$, choose $\widetilde \gamma_f \in \prod'_{v < \infty} \Sptilde_n(F_v)$ such that $\widetilde \gamma \widetilde \gamma_f \in \mathrm{im}(\iota_F)$ and set 
	\begin{equation}
		\rho(\widetilde \gamma) \ := \ \omega(\widetilde \gamma_f).
	\end{equation}
	Recall that we had fixed a lattice $L \subset V$. The subspace $S(L^n) \subset S(V(\bbA_f)^n)$, as defined in  \eqref{eqn:intro S(L) defn}, is stable under the action of $\widetilde{\Gamma}'$; when we wish to emphasize this lattice, we denote the corresponding action by $\rho_L$.

	 For a half-integer  $\kappa \in \frac12 \mathbb Z$, we define a (parallel, scalar) weight $\kappa$ slash operator, for the group $\widetilde \Gamma'$ acting on the space of functions $f  \colon \bbH^d_n \to S(L^n)^{\vee}$, by the formula
	\begin{equation} \label{eqn:slash operators}
	f|_{\kappa}[\widetilde \gamma] (\bftau) \ = \  \prod_{v|\infty} \phi_v(\sigma_v(\bftau))^{- 2 \kappa} \rho_L^{\vee}(\widetilde \gamma^{-1})\cdot f( g \bftau), \qquad \widetilde \gamma = (g_v, \phi_v(\tau))_{v | \infty}
	\end{equation}
	where $g = (g_v)_v$. 
	
	If $n >1$, consider the Jacobi group $G^J = G^J_{n,n-1} \subset \Sp_{n}$; for any ring $R$, its $R$-points are given by
	\begin{equation}
		G^J(R) := 
							\left\{   g =  
										\left( \begin{array}{cc|cc} 
													a & 0 & b & a\mu - b \lambda   \\
													\lambda^t & 1_{n-1} & \mu^t & 0 \\
													\hline
													c & 0 & d & c\mu - d \lambda \\ 
													0&0&0&1_{n-1}
										\end{array}  \right) \ | \ \begin{pmatrix} a&b\\c&d\end{pmatrix} \in \SL_2(R) , \ \mu, \tau \in M_{1,n-1}(R)\right\}.
	\end{equation} 
	Define $\widetilde \Gamma^J \subset \widetilde \Gamma'$ to be the inverse image of $G^J(\calO_F)$ in $\widetilde G'_{\bbR}$.

	\begin{definition} \label{def:Jacobi form}
		Suppose 
		\begin{equation}
			f \colon \bbH_n^d \to S(L^n)^{\vee}
		\end{equation}
		is a smooth function. 
		Given $T_2 \in \Sym_{n-1}(F)$, we say that $f(\bftau)$ transforms like a Jacobi modular form of genus $n$, weight $\kappa$ and index $T_2$ if the following conditions hold. 
			\begin{enumerate}[(a)]
					\item For all $\bfu_2 \in \Sym_{n-1}(F_{\bbR})$, 
						\begin{equation}
							f \left( \bftau + \psm{ 0 & \\ & \bfu_2 } \right)  \ =  \ e(  T_2 \bfu_2)  f(\bftau).
						\end{equation}
					\item  For all $\widetilde \gamma \in \widetilde \Gamma^J$,
						\begin{equation}
								f|_{\kappa}[\widetilde \gamma] (\bftau) \ = \ f(\bftau).
						\end{equation}

			\end{enumerate}

		Let $A_{\kappa,T_2}(\rho_L^{\vee})$ denote the space of $S(L^n)^{\vee}$-valued smooth functions that transform like a Jacobi modular form of weight $\kappa$ and index $T_2$. 
	\end{definition}
	
	\begin{remark}
		\begin{enumerate}
			\item If desired, one can impose further analytic properties of $f$ (holomorphic, real analytic, etc.).
			\item If $n = 1$, then we simply say that a function $f \colon \bbH^d_1 \to S(L)^{\vee}$ transforms like a (Hilbert) modular form of weight $\kappa$ if it satisfies $f|_{\kappa}[ \tilde \gamma](\bftau)  = f(\bftau)$ as usual.
			\item An $S(L^n)^{\vee}$-valued Jacobi modular form $f$, in the above sense, has a Fourier expansion of the form
			\begin{equation} \label{eqn:Fourier expansion of Jacobi form}
			f (\bftau) \ := \ \sum_{T = \psm{ * & * \\ * & T_2}} c_{ f}(T, \bfv)  \,q^T
			\end{equation}
			where the coefficients $c_f(T,\bfv)$ are smooth functions $ c_f(T,\bfv) \colon \Sym_n(F\otimes_{\bbQ}{\bbR})_{\gg 0} \to S(V^n)^{\vee}$. The dependence on $\bfv$ arises from the natural expectation that the Fourier-Jacobi coefficients of non-holomorphic Siegel modular forms should be Jacobi forms. 
		 \end{enumerate}
	\end{remark}

We now clarify what it should mean for  generating series with coefficients in arithmetic Chow groups, such as those appearing in \Cref{thm:Intro main theorem}, to be modular.  

First, let $D^{n-1}(X)$ denote the space of currents on $X(\bbC)$ of complex bidegree $(n-1,n-1)$, and note that there is a map
\begin{equation}
a \colon D^{n-1}(X) \ \to  \ \ChowHat{n}_{\bbC}(X,\Dcur), \qquad a(g) = ( 0 , \, [\ddc g, g]).
\end{equation}

\begin{definition} \label{def:smooth currents} Define the space $A_{\kappa, T_2}( \rho^{\vee}; D^{n-1}(X))$ of ``Jacobi forms valued in $S(L^n)^{\vee} \otimes_{\bbC} D^{n-1}(X)$" as the space of functions
\begin{equation}
\xi \colon \bbH^d_n \to D^{n-1}(X)\otimes_{\bbC} S(L^n)^{\vee}
\end{equation} 
such that the following two conditions hold.
	\begin{enumerate}[(a)]
		\item For every smooth form $\alpha$ on $X$, the function $\xi(\bftau)(\alpha)$ is an element of $A_{\kappa, T_2}( \rho_L^{\vee})$, and in particular, is smooth in the variable  $\bftau$. 
		\item Fix an integer $k\geq 0$ and let $\| \cdot \|_k$ be an algebra seminorm, on the space of smooth differential forms on $X$, such that given a sequence $\{ \alpha_i \} $, we have $\| \alpha_i \|_k \to 0$ if and  only if  $\alpha_i$, together with all partial derivatives of order $\leq k$, tends uniformly to zero. We then require that for every  compact  subset $ C\subset \bbH^d_n$, there exists a constant $c_{k,V}$ such that
			\begin{equation}
				|\xi(\bftau)(\alpha)| \leq c_{k,C} \| \alpha\|_k
			\end{equation}
			for all $\bftau \in  C$ and all smooth forms $\alpha$. 
	\end{enumerate}
\end{definition}
The second condition ensures that any such function admits a Fourier expansion as in \eqref{eqn:Fourier expansion of Jacobi form} whose coefficients  are continuous in the sense of distributions, i.e.\ they are again $S(L^n)^{\vee}$-valued currents.  

\begin{definition} \label{definition of modularity}
	Given a collection of classes $\widehat Y(T,\bfv) \in \ChowHat{n}(X,\Dcur) \otimes_{\bbC}S(L^n)^{\vee}$, consider the formal generating series 
	\begin{equation}
	\widehat \Phi_{T_2}(\bftau) \ := \  \sum_{\substack{T = \psm{ * & * \\ * & T_2}}} \widehat Y(T, \bfv)  \, q^T. 
	\end{equation}
	Roughly speaking, we say that $\widehat\Phi_{T_2}(\bftau)$ is modular (of weight $\kappa$ and index $T_2$) if there is an element 
	\begin{equation} \label{eqn:modularity def spaces}
		\widehat \phi(\bftau) \in A_{\kappa, T_2}(\rho_L^{\vee})\otimes_{\bbC} \ChowHat{n}_{\bbC}(X,\Dcur)  \ + \ a\left( A_{\kappa, T_2}(\rho_L^{\vee} ; D^{n-1}(X)) \right)
	\end{equation}
	whose Fourier expansion coincides with $\widehat \Phi_{T_2}(\bftau)$. More precisely, we define the modularity of $\widehat\Phi_{T_2}(\bftau)$ to mean that there are finitely many classes 
	\begin{equation}
		\widehat Z_1, \dots, \widehat Z_r \in \ChowHat{n}(X,\Dcur)
	\end{equation} 
	and Jacobi forms
	\begin{equation}
		f_1, \dots f_r \in A_{\kappa,T_2}(\rho^{\vee}), \qquad  g \in A_{\kappa,T_2}(\rho^{\vee}; D^{n-1}(X))
	\end{equation}
	such that 
	\begin{align} \label{eqn:def of modularity cycle decomp}
		\widehat Y(T, \bfv) =& \sum_i  c_{f_i}(T, \bfv) \, \widehat Z_i +  a \left( c_{g}(T, \bfv)    \right) \ \in \ \ChowHat{n}_{\bbC}(X,\Dcur) \otimes_{\bbC} S(L^n)^{\vee}
	\end{align}
	for all $T = \psm{* & * \\ * & T_2}$ . 
	
\end{definition}
\begin{remark} \label{remark:modularity Chow}
	\begin{enumerate}
		\item If $\widehat Z_1, \dots, \widehat Z_r \in \ChowHat{n}_{\bbC}(X)$ and $g(\bftau)$ takes values in  the space of (currents represented by) smooth differential forms on $X$, then we say that $\widehat \Phi_{T_2}(\bftau)$ is valued in $\ChowHat{n}_{\bbC}(X) \otimes S(L^n)^{\vee}$; indeed, in this case, the right hand side of  \eqref{eqn:def of modularity cycle decomp} lands in this latter group.
		\item As before, one may also impose additional analytic conditions on the forms $f_i, g$ appearing above if desired.
		\item 
			Elsewhere in the literature (e.g.\  \cite{BorcherdsGKZ,Bruinier-totally-real,ZhangThesis}), one finds a notion of modularity that amounts to omitting the second term in \eqref{eqn:modularity def spaces}; this notion is well-adapted to the case that the generating series of interest are holomorphic, i.e.\ the coefficients are independent of the imaginary part of $\bftau$. 
			
			In contrast, the generating series  that figure in our main theorem depend on these parameters in an essential way. Indeed, the Green forms $\lie g^o(T,\bfv)$ vary smoothly in $\bfv$; however, to the best of the author's knowledge, there is no natural topology on $\ChowHat{n}(X)$, or $\ChowHat{n}(X, \Dcur)$,  for which the corresponding family $\widehat Z(T,\bfv)$ varies smoothly in $v$. As we will see in the course of the proof of the main theorem, the additional term in \eqref{eqn:modularity def spaces} will allow us enough flexibility to reflect the non-holomorphic behaviour of the generating series. Similar considerations appear in \cite{EhlenSankaran} in the codimension one case.  

	\end{enumerate}
\end{remark}


	\section{The genus one case}
		In this section, we prove the main theorem in the case $n =1$; later on, this will be a key step in the proof for general $n$. 
	The proof of this theorem amounts to a comparison with a generating series of special divisors equipped with a different family of Green functions, defined by Bruinier. A similar comparison appears in \cite{EhlenSankaran} for unitary groups over imaginary quadratic fields; in the case at hand, however, the compactness  of $X$ allows us to apply spectral theory and  simplify the argument considerably.

	Suppose $t \gg 0$. In \cite{Bruinier-totally-real}, Bruinier constructs an $S(L)^{\vee}$-valued Green function  $\Phi(t)$  for the divisor $Z(t) = Z(t)^{\natural}$. 
		To be a bit more precise about this, recall the Kudla-Millson  theta function  $\Theta_{\KM}(\bftau)$  from \eqref{eqn:Kudla Millson theta}. As a function of $\bftau$, the theta function $\Theta_{\KM}$ is non-holomorphic and transforms as a Hilbert modular form of parallel weight $\kappa = p/2 + 1$. It is moreover of moderate growth, \cite[Prop. 3.4]{Bruinier-totally-real} and hence can be paired, via the Petersson pairing, with cusp forms.
		Let $\Lambda_{\KM}(\bftau) \in S_{\kappa}(\rho_L)$ denote the cuspidal projection, defined by the property
	\begin{equation}
		\langle \Theta_{\KM}, g \rangle^{\mathrm{Pet}}  = \langle \Lambda_{\KM}, g\rangle^{\mathrm{Pet}}
	\end{equation}
	for all cusp forms $g \in S_{\kappa}(\rho)$. 
	
	Writing the Fourier expansion
	\begin{equation}
		\Lambda_{\KM}(\bftau) = \sum_t  c_{\Lambda}(t)   \ q^t, \qquad \Lambda_{\KM,t} \in A^{1,1}(X) \otimes S(L)^{\vee}
	\end{equation}
	it follows from \cite[Corollary 5.16, Theorem 6.4]{Bruinier-totally-real} that  $\Phi(t)$ satisfies the equation
	\begin{equation} \label{eqn:Bruinier Green equation}
	\ddc [\Phi(t)] \ + \ \delta_{Z(t)} \ = \ \left[ c_{\Lambda}(t) + B(t) \cdot \Omega \right]
	\end{equation}
	of currents on $X$,	where
	\begin{equation}
	B(t) \ := \ - \frac{ \deg(Z(t))}{\mathrm{vol}(X, (-\Omega)^p) } \ \in \ S(L)^{\vee}.
	\end{equation}
	recall here that $(-\Omega)^p$ induces a volume form on $X$.

Finally, define classes $	\widehat Z_{\mathrm{Br}}(t) \in \ChowHat{1}_{\bbC}(X) \otimes S(L)^{\vee}$ as follows:
	\begin{equation}
		\widehat Z_{\mathrm{Br}}(t)  \ = \ 
			\begin{cases} \left( Z(t),  \Phi(t) \right),   & \text{if } t \gg 0 \\   \widehat{ \omega} \otimes \ev_0, & \text{if } t = 0 .  \\ 0, & \text{otherwise,} \end{cases}
	\end{equation}
where $\ev_0 \in S(L)^{\vee}$ is the functional $\varphi \mapsto \varphi(0)$. 

We then have the generating series
 \begin{equation}
 \widehat\phi_{\mathrm{Br}}(\tau) = \sum_t \widehat Z_{\mathrm{Br}}(t) \, q^t.
 \end{equation}

\begin{theorem}[Bruinier] \label{Bruinier modularity gen fibre}
	The generating series $\widehat\phi_{\mathrm{Br}}(\tau)$ is a (holomorphic) Hilbert modular form of parallel weight $\kappa = p/2+1$. More precisely, there are finitely many classes $\widehat Z_1, \dots \widehat Z_r \in \ChowHat{1}_{\bbC}(X)$ and holomorphic Hilbert modular forms $f_1, \dots, f_r$ such that $\widehat Z_{\mathrm{Br}}(t) = \sum c_{f_i}(t) \, \widehat Z_i$ for all $t\in F$.

	\begin{proof} The proof follows the same argument as  \cite[Theorem 7.1]{Bruinier-totally-real}, whose main steps we recall here.  Bruinier defines a space $M^!_{k}(\rho_L)$ of weakly holomorphic forms \cite[\S 4]{Bruinier-totally-real} of a certain``dual" weight $k $; each $f \in M^!_{k}(\rho_L)$ is defined by a finite collection of vectors $c_f(m) \in S(L)^{\vee}$ indexed by $m \in F$. Applying Bruinier's criterion for the modularity of a generating series, cf.\  \cite[(7.1)]{Bruinier-totally-real}, we need to show that
		\begin{equation}
				\sum_m c_f(m) \widehat Z_{\mathrm{Br}}(m) = 0 \in \ChowHat{1}(X)
		\end{equation}
		for all $f \in M_k^!(\rho_L)$. Let $c_0 = c_f(0)(0)$, and assume $c_0 \in \mathbb Z$. By \cite[Theorem 6.8]{Bruinier-totally-real}, after replacing $f$ by a sufficiently large integer multiple, there exists an analytic meromorphic section $\Psi^{an} $ of $(\omega^{an})^{-c_0}$ such that
		\begin{equation}
			\mathrm{div}  \, \Psi^{an} = \sum_{m \neq 0} c_f(m) \cdot Z(m)^{an}.
		\end{equation}
		and with 
		\begin{equation}
			\mathrm - \log \| \Psi^{an} \|^2 = \sum_{m \neq 0} c_f(m) \cdot \Phi(m).
		\end{equation}
		
		Recall  that $X$ is projective; by GAGA and the fact that the $Z(m)$'s are defined over $E$, there is an $E$-rational section $\psi$ of $\omega^{-c_0}$ and a constant $C \in \bbC$ such that 
		\begin{equation}
			\mathrm{div} (\psi) = \sum_{m\neq 0} c_f(m) Z(m), \qquad - \log \| \psi^{an} \|^2 =  - \log \| \Psi^{an} \| ^2 + C.
		\end{equation}
		Thus
		\begin{equation}
			-c_0 \cdot \widehat \omega  \ = \ \widehat{\mathrm{div}}( \psi) = \sum_{m \neq 0} c_f(m) \cdot \widehat Z_{\mathrm{Br}}(m)  \ + \ (0, C) \ \in \ \ChowHat{1}(X).
		\end{equation}
		However, as in  \Cref{remark:Chow vanishing}, the class $(0, C) = 0$, and thus we find 
		\begin{equation}
			\sum_m c_f(m) \widehat Z_{\mathrm{Br}}(m) \ =\   c_0 \cdot \widehat \omega + \sum_{m \neq 0 } c_f(m) \widehat Z_{\mathrm{Br}}(m) \ = \ 0
		\end{equation}
		as required. 
	\end{proof}
\end{theorem}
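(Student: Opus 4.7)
The plan is to apply the modularity criterion of Bruinier, which reduces the claim to verifying that every linear relation among the $\widehat Z_{\mathrm{Br}}(t)$ predicted by a weakly holomorphic form actually holds in the arithmetic Chow group. Concretely, let $M^!_{k}(\rho_L)$ denote Bruinier's space of weakly holomorphic vector-valued forms of the ``dual'' weight $k = 2 - \kappa$, each such $f$ being determined by a finite collection of principal part coefficients $c_f(m) \in S(L)^{\vee}$. By the criterion (essentially a vector-valued Borcherds-type converse), the modularity of $\widehat\phi_{\mathrm{Br}}$ is equivalent to the vanishing
\begin{equation}
  \sum_m c_f(m)\, \widehat Z_{\mathrm{Br}}(m) \ = \ 0 \ \in \ \ChowHat{1}_{\bbC}(X)
\end{equation}
for every $f \in M^!_{k}(\rho_L)$.

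The first step is to feed $f$ into Bruinier's regularized theta lift machinery. After replacing $f$ by a suitable positive integer multiple (so that $c_0 := c_f(0)(0) \in \bbZ$), this construction produces a meromorphic section $\Psi^{\mathrm{an}}$ of $(\omega^{\mathrm{an}})^{-c_0}$ on $X(\bbC)$ whose analytic divisor is the prescribed combination $\sum_{m\neq 0} c_f(m)\, Z(m)^{\mathrm{an}}$ and whose Petersson norm satisfies $-\log\|\Psi^{\mathrm{an}}\|^2 = \sum_{m\neq 0} c_f(m)\,\Phi(m)$. This is the analytic heart of the matter and is exactly where Bruinier's Green function $\Phi(m)$ was engineered to appear.

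The second step is to algebraize. Because $V$ is anisotropic the variety $X$ is projective, and the cycles $Z(m)$ are $E$-rational; thus GAGA and a standard descent argument yield an $E$-rational section $\psi$ of $\omega^{-c_0}$ with the same divisor as $\Psi^{\mathrm{an}}$, and therefore with $-\log\|\psi^{\mathrm{an}}\|^2 = -\log\|\Psi^{\mathrm{an}}\|^2 + C$ for some constant $C \in \bbC$. Computing the principal arithmetic divisor gives
\begin{equation}
  -c_0 \cdot \widehat\omega \ = \ \widehat{\mathrm{div}}(\psi) \ = \ \sum_{m\neq 0} c_f(m)\, \widehat Z_{\mathrm{Br}}(m) \ + \ (0,C) \ \in \ \ChowHat{1}_{\bbC}(X).
\end{equation}

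Finally, by the vanishing of archimedean rational classes noted earlier in the paper (apply $\widehat{\mathrm{div}}$ to a nonzero rational constant viewed as a function on $X$) one has $(0,C) = 0$, and consequently
\begin{equation}
  \sum_m c_f(m)\, \widehat Z_{\mathrm{Br}}(m) \ = \ c_0 \cdot \widehat\omega \ + \ \sum_{m\neq 0} c_f(m)\, \widehat Z_{\mathrm{Br}}(m) \ = \ 0,
\end{equation}
using the convention $\widehat Z_{\mathrm{Br}}(0) = \widehat\omega \otimes \ev_0$. The expected principal obstacle is the input from Bruinier's Borcherds-type construction (the existence of $\Psi^{\mathrm{an}}$ with the right divisor and norm); everything downstream is formal, and the projectivity of $X$ spares one any boundary or cuspidal analysis.
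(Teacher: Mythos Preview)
Your proposal is correct and follows essentially the same argument as the paper's proof: reduce to Bruinier's modularity criterion, invoke the Borcherds-type product $\Psi^{\mathrm{an}}$ from \cite{Bruinier-totally-real} to realize the relation analytically, algebraize via GAGA and $E$-rationality of the $Z(m)$, and kill the leftover constant using the vanishing $(0,C)=0$ in $\ChowHat{1}_{\bbC}(X)$. The steps, their order, and the inputs you cite match the paper's proof essentially line for line.
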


Now we consider the difference 
\begin{equation}
	\widehat\phi_1(\bftau) - \widehat \phi_{\mathrm{Br}}(\bftau)  = \sum_t (0, \lie g^o(t,\bfv) - \Phi(t))  \, q^t
\end{equation}
whose terms are classes represented by purely archimedean cycles. Comparing the Green equations \eqref{eqn:Green equation for g} and \eqref{eqn:Bruinier Green equation}, we have that for $t \neq 0$ and any smooth form  $\eta$,
\begin{equation}
	\ddc [\lie g^o(t,v) - \Phi(t)] (\eta) = \int_X \left( \lie g^o(t,\bfv) - \Phi(t) \right) \, \ddc \eta = \int_X \left( \omega(t,\bfv) - c_{\Lambda}(t) - B(t) \Omega \right) \wedge \eta
\end{equation}
where $\omega(t,\bfv)$ is the $t$'th $q$-coefficient of $\Theta_{\KM}(\bftau)$; in particular, elliptic regularity implies that the difference $\lie g^o(t,\bfv) - \Phi(t)$ is smooth on $X(\bbC)$.

\begin{theorem} \label{genus one diff modularity} There exists a smooth $S(L)^{\vee}$-valued function $s(\bftau,z) $ on $\bbH_1^d \times X(\bbC)$  such that the following holds.
	\begin{enumerate}[(i)]
		\item For each fixed $z \in X(\bbC)$, the function $s(\bftau,z)$ transforms like a Hilbert modular form in $\bftau$.
		\item Let 
		\begin{equation}
		s(\bftau,z) = \sum_t c_s(t,\bfv,z) \ q^t
		\end{equation}
		denote its $q$-expansion in $\bftau$; then for each $t$, we have
		\begin{equation}
		(0, \lie g^o(t,\bfv) -  \Phi(t) ) \ = \  (0, c_s(t, \bfv,z)) \ \in \ \ChowHat{1}_{\bbC}(X) \otimes_{\bbC}S(L)^{\vee}
		\end{equation}
	\end{enumerate}
\end{theorem}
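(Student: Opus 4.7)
My plan is to construct $s(\bftau,z)$ by inverting $\ddc_z$ on a suitable closed $(1,1)$-form-valued Hilbert modular form built from $\Theta_{\KM}$ and $\Lambda_{\KM}$. The ambiguity of this inversion is by additive constants in $z$, and such constants vanish in $\ChowHat{1}_\bbC(X)$ by \Cref{remark:Chow vanishing}; this is precisely what will allow $s$ to match $\lie g^o(t,\bfv)-\Phi(t)$ on the level of arithmetic Chow groups.

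First, I would assemble the source form. Set
\begin{equation*}
B(\bftau) \ := \ -\,\mathrm{vol}\bigl(X,(-\Omega)^p\bigr)^{-1} \int_{X(\bbC)} \Theta_{\KM}(\bftau)\wedge(-\Omega)^{p-1},
\end{equation*}
an $S(L)^\vee$-valued Hilbert modular form of weight $\kappa = p/2+1$; integrating the Green equation \eqref{eqn:global green eqn} for $n=1$ against $(-\Omega)^{p-1}$ and applying Stokes shows that its $t$-th Fourier coefficient agrees with $B(t)$ from \eqref{eqn:Bruinier Green equation}. Define
\begin{equation*}
\eta(\bftau) \ := \ \Theta_{\KM}(\bftau) - \Lambda_{\KM}(\bftau) - B(\bftau)\,\Omega,
\end{equation*}
a Hilbert modular form of weight $\kappa$ valued in closed smooth $(1,1)$-forms on $X(\bbC)$, with coefficients in $S(L)^\vee$. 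By direct comparison of \eqref{eqn:Green equation for g} with \eqref{eqn:Bruinier Green equation}, its $t$-th Fourier coefficient equals $\omega(t,\bfv)-c_\Lambda(t)-B(t)\Omega = \ddc(\lie g^o(t,\bfv)-\Phi(t))$, which is $\ddc$-exact with a smooth primitive by elliptic regularity; in particular the cohomology class $[\eta(\bftau)] \in H^2(X(\bbC),\bbC)$ vanishes at every $\bftau$.

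Next, I would invert $\ddc_z$ in families. Since $X(\bbC)$ is compact, connected, and K\"ahler, the $\ddc$-Lemma (equivalently Hodge theory) produces a unique smooth function $s(\bftau,z)$, depending smoothly on $(\bftau,z)$, with
\begin{equation*}
\ddc_z\, s(\bftau,z) = \eta(\bftau) \qquad \text{and} \qquad \int_{X(\bbC)} s(\bftau,z)\,(-\Omega)^p = 0.
\end{equation*}
Concretely, $s(\bftau,z)$ is (up to a scalar) the image of $\eta(\bftau)$ under the Green operator for the K\"ahler Laplacian $\Delta$, a canonical bounded $\bbC$-linear operator acting only on the $z$-variable. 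Expanding $\eta(\bftau)$ in an orthonormal basis of $\Delta$-eigenfunctions on $X(\bbC)$ yields a spectral series for $s$ whose coefficients are themselves Hilbert modular forms in $\bftau$, making joint smoothness of $s$ in $(\bftau,z)$ and the convergence of its Fourier expansion transparent. Modularity of $s(\bftau,z)$ in $\bftau$ is then automatic: the Weil representation action of $\widetilde\Gamma'$ acts only on $\bftau$ and commutes with the canonical normalized $\ddc_z$-inverse.

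For the comparison, note that for each $t$ both $c_s(t,\bfv,z)$ and $\lie g^o(t,\bfv)-\Phi(t)$ are smooth functions on $X(\bbC)$ whose $\ddc$ equals $\omega(t,\bfv)-c_\Lambda(t)-B(t)\Omega$; their difference is therefore a smooth $\ddc$-closed function, and on a compact connected K\"ahler manifold this forces it to be constant in $z$. By \Cref{remark:Chow vanishing}, any class of the form $(0,\text{constant})$ vanishes in $\ChowHat{1}_\bbC(X)$, yielding the required identity in $\ChowHat{1}_\bbC(X)\otimes S(L)^\vee$. The main technical point is the $\ddc_z$-inversion step — specifically, ensuring joint smoothness and modularity of $s(\bftau,z)$ — but thanks to the compactness of $X$ this reduces to routine Hodge theory and spectral analysis on a compact K\"ahler manifold, as already emphasized at the start of this section.
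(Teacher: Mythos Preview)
Your proposal is correct and follows essentially the same route as the paper: build a weight-$\kappa$ modular object from $\Theta_{\KM}-\Lambda_{\KM}$, invert the Laplacian/$\ddc_z$ on the compact K\"ahler manifold $X(\bbC)$ via spectral theory (Green operator/eigenfunction expansion), and conclude by noting that the resulting Fourier coefficients differ from $\lie g^o(t,\bfv)-\Phi(t)$ by $z$-constants, which vanish in $\ChowHat{1}_\bbC(X)$ by \Cref{remark:Chow vanishing}. The only cosmetic difference is that you subtract $B(\bftau)\Omega$ up front to make $\eta(\bftau)$ genuinely $\ddc$-exact, whereas the paper works with $h(\bftau,z)=(L^*)^{p-1}\circ\ast(\Theta_{\KM}-\Lambda_{\KM})$ and lets the projection onto the $\lambda>0$ eigenspaces kill the harmonic part automatically; the paper is also more explicit about the convergence argument (Weyl's law and sup-norm bounds on eigenfunctions), which you should spell out when you say the inversion step ``reduces to routine spectral analysis.''
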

Combining this theorem with \Cref{Bruinier modularity gen fibre}, we obtain:
\begin{corollary} \label{genus one modularity}
	The generating series $\widehat \phi_1(\bftau) $ is modular, valued in $\ChowHat{1}_{\bbC}(X) \otimes S(L)^{\vee}$, in the sense of \Cref{remark:modularity Chow}(i). \qed
\end{corollary}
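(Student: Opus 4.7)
The plan is to invert $\ddc$ in the $z$-variable coefficient-by-coefficient, applied to a modular generating series whose $t$-th coefficient is $\ddc\bigl(\lie g^o(t,\bfv) - \Phi(t)\bigr)$. The key inputs will be the modularity of this auxiliary series together with standard Hodge theory on the compact Kähler manifold $X(\bbC)$.

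The first step is to assemble the target generating series.  Subtracting \eqref{eqn:Bruinier Green equation} from the $n=1$ case of \eqref{eqn:global green eqn}, the $\delta_{Z(t)}$-terms cancel, so for each totally positive $t$
\[
R_t \;:=\; \ddc\bigl(\lie g^o(t,\bfv) - \Phi(t)\bigr) \;=\; \omega(t,\bfv) - c_{\Lambda}(t) - B(t)\,\Omega,
\]
a smooth $\ddc$-exact $(1,1)$-form.  For $t\neq 0$ not totally positive one sets $\Phi(t) = 0$; since $\Lambda_{\KM}$ is a holomorphic cusp form (so $c_\Lambda(t) = 0$) and $Z(t) = 0$ (so $B(t) = 0$), the same identity $R_t = \omega(t,\bfv)$ holds.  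Packaging these together, define
\[
R(\bftau) \;:=\; \Theta_{\KM}(\bftau) - \Lambda_{\KM}(\bftau) - E(\bftau)\cdot \Omega,
\]
with scalar modular form $E(\bftau) := -\mathrm{vol}(X, (-\Omega)^p)^{-1}\int_X \Theta_{\KM}(\bftau) \wedge (-\Omega)^{p-1}$.  Since $\int_X \ddc \lie g^o(t,\bfv) \wedge (-\Omega)^{p-1} = 0$ by Stokes (as $-\Omega$ is closed), the Green equation identifies the $t$-th Fourier coefficient of $E$ with $B(t)$.  Thus $R(\bftau)$ is a Hilbert modular form of weight $\kappa = p/2 + 1$ valued in smooth $(1,1)$-forms on $X(\bbC)$, tensored with $S(L)^{\vee}$, whose $t$-th Fourier coefficient is exactly $R_t$.

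Next comes the Hodge-theoretic inversion.  On the compact Kähler manifold $X(\bbC)$, the $\ddc$-lemma furnishes a continuous linear operator $G$ sending each $\ddc$-exact smooth $(1,1)$-form $\alpha$ to the unique smooth function $G(\alpha)$ on $X(\bbC)$ satisfying $\ddc G(\alpha) = \alpha$ and $\int_X G(\alpha)(-\Omega)^p = 0$; concretely $G$ is given by convolution against a smooth-off-the-diagonal kernel coming from the Green's operator for the Hodge Laplacian.  Setting
\[
s(\bftau,z) \;:=\; G\bigl(R(\bftau)\bigr)(z),
\]
the fact that $G$ acts only in the $z$-variable, together with joint smoothness of $R(\bftau)(z)$ in $(\bftau,z)$ and the smoothing property of $G$, implies that $s$ is jointly smooth in $(\bftau,z)$, $S(L)^{\vee}$-valued, and, for each fixed $z$, modular in $\bftau$ with Fourier coefficients $c_s(t,\bfv,z) = G(R_t)(z)$.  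This gives (i).

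For (ii), by construction $\ddc_z c_s(t,\bfv,z) = R_t = \ddc_z(\lie g^o(t,\bfv) - \Phi(t))$, so the difference $c_s(t,\bfv,z) - (\lie g^o(t,\bfv) - \Phi(t))$ is a smooth pluriharmonic function on the compact connected $X(\bbC)$, hence constant in $z$.  By \Cref{remark:Chow vanishing}, such a constant represents the zero class in $\ChowHat{1}_{\bbC}(X)$, so $(0, c_s(t,\bfv,z)) = (0, \lie g^o(t,\bfv) - \Phi(t))$, as required.  The main obstacle is the first step — verifying the modularity of $R(\bftau)$, especially the modularity of $E(\bftau)$ and the vanishing of Fourier coefficients at non-totally-positive $t$ — but both reduce to the Green equation, Stokes' theorem, and the holomorphy of the cuspidal projection $\Lambda_{\KM}$.
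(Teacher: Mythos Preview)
Your proposal is correct and follows essentially the same strategy as the paper: invert $\ddc$ on the smooth difference $\lie g^o(t,\bfv)-\Phi(t)$ by applying Hodge theory on the compact K\"ahler manifold $X(\bbC)$, and observe that the resulting primitive is determined up to a constant in $z$, which vanishes in $\ChowHat{1}_{\bbC}(X)$ by \Cref{remark:Chow vanishing}.

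The packaging differs slightly. The paper works with $h(\bftau,z) = (L^*)^{p-1}\circ\ast\bigl(\Theta_{\KM}-\Lambda_{\KM}\bigr)$ and defines $s$ by the explicit spectral sum $4\pi p\sum_{\lambda>0}\lambda^{-1}\langle h,\phi_\lambda\rangle_{L^2}\,\phi_\lambda$, establishing convergence and joint smoothness via Weyl's law and the elementary decay estimate $\langle h,\phi_\lambda\rangle = \lambda^{-N}\langle(-\Delta_X)^N h,\phi_\lambda\rangle$. You instead subtract off the harmonic part explicitly by introducing $E(\bftau)\Omega$ (so that $R(\bftau)$ is fibrewise $\ddc$-exact) and then appeal to the abstract Green's operator as a pseudodifferential inverse. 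These are two descriptions of the same operator; your approach is more concise but treats the regularity of $s$ in $(\bftau,z)$ as a black box, whereas the paper's eigenfunction expansion makes this step self-contained. One small omission: you prove the content of \Cref{genus one diff modularity} but do not explicitly state the final step of combining with Bruinier's modularity (\Cref{Bruinier modularity gen fibre}) to conclude modularity of $\widehat\phi_1 = \widehat\phi_{\mathrm{Br}} + (\widehat\phi_1 - \widehat\phi_{\mathrm{Br}})$; this is immediate, but should be mentioned.
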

	
	\begin{proof}[Proof of \Cref{genus one diff modularity}]
	  Recall that the  $(1,1)$ form $- \Omega$ is a K\"ahler form on $X$. Let $- \Delta_X$ denote the corresponding Laplacian; the eigenvalues of $- \Delta_X$  are non-negative, discrete in $\bbR_{\geq 0}$, and each eigenspace is finite dimensional.

		Write $\Delta_X = 2 (\partial \partial^* +  \partial^* \partial)$ and let $L \colon \eta \mapsto -\eta \wedge (-\Omega)$ denote the Lefschetz operator. 
		From the K\"ahler identities $[L, \partial] = [L,\overline \partial] = [L, \Delta_S] = 0$ and $[L, \partial^*] =  i \overline \partial$, an easy induction argument shows that
		\begin{equation}
		\partial^* \circ L^k \ =\ L^k \circ \partial^*  \ -  \  i k  \, \overline \partial \circ L^{k-1}
		\end{equation}
		for $k \geq 1$. 
		
		Thus, for a smooth function $\phi$ on $X$, we have
		\begin{align}
		\Delta_X (\phi) \cdot (-\Omega)^p   = \Delta_X \circ L^p(\phi) &= 2 \partial \partial ^* \circ L^p (\phi) \\ &= \ 2 \ \partial \circ \left( L^p \circ \partial^* - i  p \overline \partial  \circ L^{p-1} \right)(\phi ) \\
		&= \ -   2 i  p \ \partial \overline \partial  \left( \phi \wedge (-\Omega)^{p-1} \right) \\
		&= \ - 4  \pi p \ \ddc \left( \phi \wedge (-\Omega)^{p-1} \right);
		\end{align}
		note here that $p = \dim_{\bbC} (X).$
		
		Consider the Hodge pairing
		\begin{equation}
			\langle f,g \rangle_{L^2} = \int_X f \,  \overline g  \, (-\Omega)^p = (-1)^p\int_X f \,  \overline g  \, \Omega^p.
		\end{equation}
		If $\lambda >0$ and  $\phi_{\lambda}$ is a Laplace eigenfunction, we have that for any $t\neq0$, 
		\begin{align}
			\langle \lie g^o(t,\bfv) - \Phi(t), \phi_{\lambda}\rangle_{L^2} &= \lambda^{-1} \langle \lie g^o(t,\bfv) - \Phi(t), - \Delta_X \phi_{\lambda} \rangle_{L^2}  \\
			&= (-1)^p  \lambda^{-1} \int_X ( \lie g^o(t,\bfv) - \Phi(t)) \cdot (\overline{- \Delta_X \phi_{\lambda}}) 	\cdot \Omega^p \\
			&= (-1)^{p+1} \frac{ 4 \pi p }{\lambda} \int_X \left( \lie g^o(t,\bfv) - \Phi(t) \right) \cdot \ddc \left( \overline \phi_{\lambda} \, \Omega^{p-1} \right)  \\
			&= (-1)^{p+1 } \frac{ 4 \pi p }{\lambda} \int_X \left( \omega(t,\bfv) - c_{\Lambda}(t)   - B(t) \Omega \right) \wedge \overline \phi_{\lambda} \Omega^{p-1}
		\end{align}
		Note that $\int_X \overline \phi_{\lambda} \Omega^p = \langle 1, \phi_{\lambda} \rangle_{L^2} = 0$, as $\lambda >0$ and so $\phi_{\lambda}$ is orthogonal to constants; thus the term involving $B(m) \Omega$ vanishes, and so
		\begin{equation}
				\langle \lie g^o(t,\bfv) - \Phi(t), \phi_{\lambda}\rangle_{L^2} =   (-1)^{p+1 }\frac{ 4 \pi p }{\lambda} \int_X \left( \omega(t,\bfv) - c_{\Lambda}(t)  \right) \wedge \overline \phi_{\lambda} \Omega^{p-1}
		\end{equation}
		for all $t \neq 0$. This equality also holds for $t=0$, as both sides of this equation vanish. Indeed, for the left hand side we have $\lie g^o(0,\bfv) = 0$, cf.\ \eqref{eqn:green current zero}, and $\Phi(0) = 0$ by definition; on the right hand side, $c_{\Lambda}(0) =0 $ as $\Lambda_{\KM}(\bftau)$ is cuspidal, and the constant term of the Kudla-Millson theta function is given by
		\begin{equation}
			\omega(0, \bfv) =  \Omega \otimes \ev_0.
		\end{equation}

		Now define 
		\begin{equation}
		h(\bftau,z) = (L^*)^{p-1} \circ \ast  \left( \Theta_{\KM}(\bftau) - \Lambda_{\KM}(\bftau) \right)
		\end{equation}
		where $\ast$ is the Hodge star operator, and $L^*$ is the adjoint of the Lefschetz map $L$. Then  $h(\bftau,z)$ is smooth, and transforms like a modular form in $\bftau$, since both $\Theta_{\KM}(\bftau)$ and $\Lambda_{\KM}(\bftau)$ do; writing its Fourier expansion 
		\begin{equation}
		h(\bftau,z) \ =\ \sum_t c_h(t, \bfv,z) \, q^t,
		\end{equation} 
		we have
		\begin{equation}
		\langle c_h(t, \bfv,z), \phi  \rangle_{L^2} =  (-1)^{p-1} \int_X  \left( \omega(t,\bfv) -c_{ \Lambda}(t)  \right)  \wedge \overline{\phi } \,   \Omega^{p-1} 
		\end{equation}
		for any smooth function $\phi$.
		
		Note that for any integer $N$ and $L^2$ normalized eigenfunction $\phi_{\lambda}$ with $\lambda \neq 0$,  
		\begin{equation} \label{eqn:laplace eigenfunction}
		\langle	h , \phi_{\lambda} \rangle_{L^2} = \lambda^{-N} \langle - \Delta^N_X (h), \varphi_{\lambda} \rangle \leq  \lambda^{-N}  \| - \Delta^N_X(h) \|^2_{L^2}.
		\end{equation}
		Choose an orthonormal basis $\{ \phi_{\lambda}\}$ of $L^2(X)$ consisting of eigenfunctions, and consider the sum  
		\begin{equation}  \label{eqn:s(tau,z) def}
		s(\tau,z) =  4 \pi p \sum_{\lambda >0} \lambda^{-1} \langle h, \phi_{\lambda} \rangle_{L^2} \, \phi_{\lambda};
		\end{equation}
		by Weyl's law, there are positive constants $C_1$ and $C_2$ such that 
		\begin{equation}
		\# \{  \lambda \ | \ \lambda < x \} \sim x^{C_1}
		\end{equation}
		and $\| \phi_{\lambda}\|_{L^{\infty}} = O(\lambda^{C_2})$. Thus taking $N$ sufficiently large in \eqref{eqn:laplace eigenfunction}, we conclude that the sum \eqref{eqn:s(tau,z) def} converges uniformly, and hence defines a smooth function in $(\bftau,z)$. 
		Writing its Fourier expanison as 
		\begin{equation}
		s(\bftau,z) = \sum_t c_s(t,\bfv,z) q^t,
		\end{equation}				
		we have
		\begin{equation}
		\langle c_s(t,\bfv,z) , \phi_{\lambda} \rangle_{L^2} = \langle \lie g^o(t,\bfv) -   \Phi(t), \phi_{\lambda} \rangle_{L^2}
		\end{equation}
		for any eigenfunction $\phi_{\lambda}$ with $\lambda \neq 0$. Thus $ c_s(t,\bfv,z) $ and $ g^o(t,\bfv) -   \Phi(t)$ differ by a  function that is constant in $z$; as $(0,1) = 0 \in \ChowHat{1}_{\bbC}(X)$, we have
		\begin{equation}
		(0, \lie g^o(t,\bfv) -  \Phi(t)) = (0, c_s(t,\bfv,z)) \in \ChowHat{1}_{\bbC}(X) \otimes_{\bbC} S(L)^{\vee},
		\end{equation}
		which concludes the proof of the theorem.

	\end{proof}

	\section{Decomposing Green currents}
	We now suppose $n > 1$ and fix $T_2 \in \Sym_{n-1}(F)$.  
	
	The aim of this section is to  establish  a decomposition $	\widehat Z(T, \bfv) =  \widehat A(T,\bfv) + \widehat B(T,\bfv)$, where $T = \psm{*&*\\ *& T_2}$.
	Our first step is to decompose Green forms in a useful way; the result can be seen as an extension of the star product formula \cite[Theorem 4.10]{GarciaSankaran} to the degenerate case.
	
	 Let $x = (x_1, \dots, x_n) \in (V_1 )^n =(  V\otimes_{F, \sigma_1} \bbR)^n$ and set $y = (x_2, \dots, x_n) \in V_1^{n-1}$ . By \cite[Proposition 2.6.(a)]{GarciaSankaran}, we may decompose 
		\begin{equation} \label{eqn: go decomposition}
			\lie g^o(x, \rho) =  \int_1^{\infty}  \nu^o(\sqrt t \, x_1) \wedge \varphi^o_{\KM}(\sqrt t \, y) \ \frac{dt}{t^{1+\rho}} \ + \  \int_1^{\infty}  \varphi_{\KM}^o(\sqrt t \, x_1) \wedge 	\nu^o(\sqrt t \, y) \ \frac{dt}{t^{1+\rho}} 
		\end{equation}
	for $Re(\rho) \gg 0$.
	
	By the transgression formula \eqref{eqn:nu transgression},  we may rewrite the second term in \eqref{eqn: go decomposition} as 
		\begin{align}  
			\int_1^{\infty} & \varphi_{\KM}^o  (\sqrt t \, x_1)  \wedge 	\nu^o(\sqrt t \, y) \ \frac{dt}{t^{1+\rho}}    \\
			&= \int_1^{\infty}  \left( \int_1^t \, \frac{\partial}{\partial u} \varphi_{\KM}^o(\sqrt u \, x_1) \, du\right) \wedge \nu^o(\sqrt t \, y) \ \frac{dt}{t^{1+\rho}}    + \varphi_{\KM}^o(x_1) \wedge \int_1^{\infty} \nu^o(\sqrt t \, y)  \frac{dt}{t^{1+\rho}}  \\
			&= \int_1^{\infty} \left( \int_1^t \, - \ddc \nu^o(\sqrt u \, x_1) \, \frac{du}{u}\right) \wedge \nu^o(\sqrt t \, y) \ \frac{dt}{t^{1+\rho}} + \varphi_{\KM}^o(x_1) \wedge \lie g^o(y,\rho) . \label{eqn: g0 second piece}
		\end{align}
	For $t>1$, define  smooth forms
		\begin{equation}
			\alpha_t(x_1,y) :=  \int_1^{t}  \overline{\partial} \nu^o(\sqrt u \, x_1) \, \frac{du}{u} \wedge \nu^o(\sqrt{t} \, y) 
		\end{equation}
	and
		\begin{equation}
			\beta_t(x_1,y) :=  \int_1^t \nu^o(\sqrt{u} \, x_1) \,  \frac{du}{u} \wedge \partial \nu^o(\sqrt{t} y) 
		\end{equation}
	so that 
		\begin{align}
		\eqref{eqn: g0 second piece}
			= \frac{i}{2 \pi } \int_1^{\infty} \partial & \alpha_t(x_1,y) + \overline{\partial} \beta_t(x_1,y) \, \frac{dt}{t^{1+\rho}}  \ - \ \int_1^{\infty}   \left[ \int_1^{t} \nu^o(\sqrt{u} \, x_1) \, \frac{du}{u} \right] \wedge \ddc \nu^o(\sqrt t \, y) \, \frac{dt}{t^{1+ \rho}}  \notag \\ 
				&+ \varphi_{\KM}^o(x_1) \wedge \lie g^o(y, \rho).
		\end{align}
	Finally, we consider the second integral above;  as $Re(\rho)$ is large, we may interchange the order of integration and obtain
	\begin{align}
	\int_1^{\infty}  & \left(\int_1^{t} \nu^o(\sqrt{u} \, x_1) \, \frac{du}{u} \right) \wedge \ddc \nu^o(\sqrt t \, y) \, \frac{dt}{t^{1+ \rho}} \\
	& = \int_1^{\infty} \nu^o(\sqrt u x_1) \wedge  \left( \int_u^{\infty}  \ddc \nu^o(\sqrt t y) \frac{dt}{t^{1+\rho}}   \right) \frac{du}{u} \\
	&=  \int_1^{\infty} \nu^o(\sqrt u x_1) \wedge  \left( \int_u^{\infty}  - \frac{\partial}{\partial t} \varphi_{\KM}^o(\sqrt t y) \frac{dt}{t^{\rho}}   \right) \frac{du}{u} \\
	&= \int_1^{\infty} \nu^o(\sqrt u \, x_1) \wedge \varphi_{\KM}^o(\sqrt{u} \, y) \, \frac{du}{u^{1+ \rho}} - \rho  \int_1^{\infty} \nu^o(\sqrt u\, x_1) \wedge \left( \int_u^{\infty} \varphi_{\KM}^o(\sqrt t \, y) \frac{dt}{t^{1+\rho}} \right) \, \frac{du}{u}
	\end{align}
	Note that the first term here coincides with the first term in \eqref{eqn: go decomposition}. Combining these computations, it follows that
	\begin{align}
	\lie g^o(x,\rho) &= \varphi^o_{\KM}(x_1) \wedge \lie g^o(y,\rho) + \frac{i}{2 \pi } \int_1^{\infty} \partial \alpha_t(x_1,y) + \overline{\partial} \beta_t(x_1,y) \, \frac{dt}{t^{1+\rho}}   \notag  \\
	& 		\qquad \qquad + \rho  \int_1^{\infty} \nu^o(\sqrt u\, x_1) \wedge \left( \int_u^{\infty} \varphi_{\KM}^o(\sqrt t \, y) \frac{dt}{t^{1+\rho}} \right) \, \frac{du}{u}.
	\end{align}
	This identity holds for arbitrary $x =(x_1,y) \in V_1^n$ and $Re(\rho) \gg 0$, and is an identity of smooth differential forms on $\bbD$.
	
	To continue, we view the above line as an identity of currents, and consider meromorphic continuation.\footnote{More precisely, we mean that for every smooth form $\alpha$, the function $[\lie g^o(x,\rho)](\alpha)  = \int_X \lie g^o(x, \rho) \wedge \alpha$ admits a meromorphic continuation in $\rho$, such that the Laurent coefficients are continuous in $\alpha$ in the sense of currents.}   Note that (as currents)
	\begin{align}
 	\rho	\int_1^{\infty} & \nu^o(\sqrt u\, x_1) \wedge \left( \int_u^{\infty} \varphi_{\KM}^o(\sqrt t \, y) \frac{dt}{t^{1+\rho}} \right) \, \frac{du}{u}  \\ 
	&=  \rho \int_1^{\infty} \nu^o(\sqrt u \, x_1) \wedge  \int_u^{\infty} \left( \varphi_{\KM}^o(\sqrt t \, y) - \delta_{\bbD^+_y} \wedge \Omega^{n-1-r(y)}  \right) \frac{dt}{t^{1+\rho}}	 \frac{du}{u} \notag 	 \\
	& \qquad +  \int_1^{\infty} \nu^o(\sqrt u x_1) \wedge \delta_{\bbD^+_y } \wedge \Omega^{n-1-r(y)}  \ \frac{du}{u^{1+\rho}} \
	\end{align}
	where $r(y) = \dim \, \mathrm{span} (y)$. The first term vanishes at $\rho = 0$; indeed, the double integral in the first term is holomorphic at $\rho = 0$, as can easily seen by  by Bismut's asymptotic  \cite[Theorem 3.2]{BismutInv90}
	\begin{equation}
	\varphi_{\KM}^o(\sqrt{t} y) - \delta_{\bbD^+_y} \wedge \Omega^{n-1-r(y)} \ =\  O(t^{-1/2})
	\end{equation}
	as $t \to \infty$.
	
	Next, let 
	\begin{equation}
	\alpha(x_1,y; \rho) \ := \ \int_1^{\infty} \, \alpha_t(x_1,y) \frac{dt}{t^{1+ \rho}}, \qquad \qquad 	\beta(x_1,y; \rho) \ := \ \int_1^{\infty} \, \beta_t(x,y) \frac{dt}{t^{1+ \rho}}.
	\end{equation}
	A straightforward modification of the proof of \cite[Proposition 2.12.(iii)]{GarciaSankaran} can be used to show that $\alpha(x_1, y; \rho)$ and $\beta(x_1, y; \rho)$ have meromorphic extensions, as currents, to a neighbourhood of $\rho = 0$. We denote the constant terms in the Laurent expansion at $\rho = 0$ by $\alpha(x_1, y)$ and $\beta(x_1, y)$ respectively. 
	Thus, as currents on $\bbD$, we have
	\begin{align}
	\lie g^o(x_1,y) 
	&= \varphi_{\KM}^o(x_1) \wedge \lie g^o(y) + d \alpha(x_1,y) + d^c \beta(x_1,y)  \\
	& \qquad \qquad   +  CT_{\rho = 0}   \int_1^{\infty} \nu^o(\sqrt u x_1) \wedge \delta_{\bbD^+_y} \wedge \Omega^{n-1-r(y)}  \ \frac{du}{u^{1+\rho}}  \label{eqn:go xy decomp}
	\end{align} 
	for all $x_1 \in V_1$ and $y \in (V_1)^{n-1}$.
	
	As a final observation, note that if $x_1 \in \mathrm{span}(y)$, then $\nu^o(\sqrt u x_1) \wedge \delta_{\bbD^+_y} =\delta_{\bbD^+_y}$, see \cite[Lemma 2.4]{GarciaSankaran}. Thus
	\begin{align}
			\gamma(x_1, y) &:=	CT_{\rho = 0}   \int_1^{\infty} \nu^o(\sqrt u x_1) \wedge \delta_{\bbD^+_y} \wedge \Omega^{n-1-r(y)}  \ \frac{du}{u^{1+\rho}} \\
			& = \begin{cases} 
				\lie g^o(x_1) \wedge \delta_{\bbD^+_y} \wedge \Omega^{n-1-r(y)} , & \text{if } x_1 \notin \mathrm{span}(y) \\ 
				0 , & \text{if } x_1 \in \mathrm{span}(y).
				\end{cases}
	\end{align}
	In the case that the components of $x = (x_1, y) = (x_1, \dots , x_n)$ are linearly independent, we recover the star product formula from \cite[Theorem 2.16]{GarciaSankaran}. 
	
	Now we discuss a decomposition of the global Green current $\lie g^o(T, \bfv)$, for $\bfv \in \Sym_{n}(F_{\bbR})_{\gg 0}$. Write 
	\begin{equation}
		v := \sigma_1(\bfv) = \begin{pmatrix} v_1  & v_{12} \\ v_{12}' & v_2 \end{pmatrix}
	\end{equation} with $v_1 \in \bbR_{>0}$ and $v_{12} \in M_{1,n-1}(\bbR)$; recall that $\sigma_1 \colon F \to \bbR$ is the distinguished real embedding. Set 	
		\begin{equation}
			v_2^* := v_2 - v'_{12}v_{12} / v_1  \in \Sym_{n-1}(\bbR)_{>0}, 
		\end{equation}
		and fix a matrix $a_2^* \in \GL_{n-1}(\bbR)$ such that $v_2^* = a_2^* \cdot (a_2^*)'$.
		
	\begin{proposition} \label{gT decomp}
	Let $T \in \Sym_n(F)$ and $\bfv \in \Sym_n(F_{\bbR})_{\gg 0}$ as above, and define $S(L)^{\vee}$-valued currents $\lie a(T,\bfv)$ and $\lie b(T,\bfv)$ on $X$ by the formulas
		\begin{equation} \label{eqn:lie a def}
			\lie a(T,\bfv)(\varphi) := \sum_{\bfx \in \Omega(T) } \varphi(\bfx) \gamma(\sqrt{v_1} x_1, y),
		\end{equation}
		where we have written $\sigma_1(\bfx) = (x_1, y) \in V_1 \oplus (V_1)^{n-1}$, and 
			\begin{equation}
				\lie b(T,\bfv)(\varphi) = \sum_{\bfx \in \Omega(T) } \varphi(\bfx) \,  \varphi^o_{\KM}  \left(\sqrt{v_1} x_1 +  \frac{y \cdot  v_{12}'}{\sqrt{ v_1} } \right)  \wedge \lie g^o(y a_2^*).
			\end{equation}
		Then  
			\begin{equation}
				\lie g^o(T,\bfv) (\varphi)   \equiv \lie a(T,\bfv)(\varphi) + \lie b(T,\bfv)(\varphi)  \pmod{\mathrm{im} \,\partial \, + \, \mathrm{im} \, \overline{\partial}}.
			\end{equation}

		\begin{proof}
			First, the fact that the sums defining $\lie a(T,\bfv)$ and $\lie b(T,\bfv)$ converge to currents on $X$ follows from the same argument as \cite[Proposition 4.3]{GarciaSankaran}.
			
			Now recall that 
				\begin{equation}
					\lie g^o(T,\bfv) (\varphi) = \sum_{\bfx \in \Omega(T) } \varphi(\bfx) \, \lie g^o( xa)
				\end{equation}             
		 	where $x = \sigma_1(\bfx)$, and $a \in \GL_n(\bbR)$ is any matrix satisfying $v = aa'$. Note that   			
				\begin{equation}
					v = \begin{pmatrix} v_1  & v_{12} \\ v_{12}' & v_2 \end{pmatrix} = \theta \begin{pmatrix} v_1 & \\ & v_2 ^*  \end{pmatrix}  \theta',  
					\qquad \text{ where } \theta = \begin{pmatrix} 1 & \\   v_{12}' / v_1 & 1_{n-1} \end{pmatrix} .
				\end{equation}
			Thus, we may take 
				\begin{equation} \label{eqn:a theta}
						a = \theta \cdot \psm{ \sqrt{v_1} & \\ & a_2^*} ,
				\end{equation}
			and so, applying \eqref{eqn:go xy decomp} , we find
				\begin{align}
					\lie g^o(T,\bfv)(\varphi)  	
					&=   \sum_{\bfx \in \Omega(T)} \, \varphi(\bfx) \, \lie g^o\left( (x_1, y) \theta  \psm{ \sqrt{v_1} 		& \\ & a_2^*} \right)  \qquad \qquad  x = (x_1, y)\\
					&=  \sum_{\bfx \in \Omega(T)}  \varphi(\bfx) \, \lie g^o \left( \sqrt{v_1} x_1 +  \frac{y \cdot  v_{12}'}{\sqrt{ v_1} } , \, y a_2^* \right)    \\
					&= \sum_{\bfx \in \Omega(T)} \varphi(\bfx) \Big( \varphi^o_{\KM} \left( \sqrt{v_1} x_1 +  \frac{y \cdot  v_{12}'}{\sqrt{ v_1} } \right) \wedge \lie g^o(y a_2^*) + \partial \alpha( \sqrt{v_1} x_1 +  \frac{y \cdot  v_{12}'}{\sqrt{ v_1} } ,y a_2^*) \notag \\
					& \qquad \qquad\qquad\qquad\qquad \ + \overline \partial \beta( \sqrt{v_1} x_1 +  \frac{y \cdot  v_{12}'}{\sqrt{ v_1} } , y a_2^*) + \gamma( \sqrt{v_1} x_1 +  \frac{y \cdot  v_{12}'}{\sqrt{ v_1} } ,y a_2^*) \Big). \label{eqn:go(T) decomp big sum}
			\end{align}
			
			Again, an argument as in  \cite[Proposition 4.3]{GarciaSankaran} shows that the sums
			\begin{equation}
			\eta_1 :=  \sum_{\bfx \in \Omega(T)} \varphi(\bfx) \,  \alpha( \sqrt{v_1} x_1 +  \frac{y \cdot  v_{12}'}{\sqrt{ v_1} } ,y a_2^*)
			\end{equation}
			and
			\begin{equation}
			\eta_2 := \sum_{\bfx \in \Omega(T)} \varphi(\bfx) \,  \beta( \sqrt{v_1} x_1 +  \frac{y \cdot  v_{12}'}{\sqrt{ v_1} } ,y a_2^*)
			\end{equation}
			converge to $\Gamma$-invariant currents on $\bbD$, and hence define currents on $X$. Moreover, it follows easily from the definitions that 
			\begin{equation}
			\gamma( \sqrt{v_1} x_1 +  \frac{y \cdot  v_{12}'}{\sqrt{ v_1} } ,y a_2^*) = \gamma(\sqrt{v_1} x_1, y).
			\end{equation}
			Thus, we find 
				\begin{equation}
					\lie g^o(T,\bfv)(\varphi)= \lie a(T,\bfv)(\varphi)+ \lie b(T,\bfv)(\varphi) + \partial \eta_1 + \overline{\partial } \eta_2,
				\end{equation} 
			as required. 
		\end{proof}
	\end{proposition}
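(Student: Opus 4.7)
The plan is to lift the local current identity \eqref{eqn:go xy decomp}, established in the preceding analysis for a single tuple $x = (x_1, y) \in V_1 \oplus V_1^{n-1}$, to a global identity after summing over $\bfx \in \Omega(T)$ with the correct Cholesky-type factorization of $\sigma_1(\bfv)$. The whole argument is essentially an unwinding of definitions once the right choice of $a \in \GL_n(\bbR)$ with $v = aa'$ is made.

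First I would verify that the sums defining $\lie a(T,\bfv)$ and $\lie b(T,\bfv)$ actually converge to currents on $X$; this follows by the same growth/majorization argument as \cite[Proposition 4.3]{GarciaSankaran}, which uses the Schwartz decay of $\varphi_{\KM}^o$ and of the currents $\gamma$ and $\lie g^o$ in the transversal directions, together with the fact that $\varphi \in S(L^n)$ has compact support away from the archimedean place. Next, I would exploit the block decomposition
\begin{equation*}
v = \theta \begin{pmatrix} v_1 & \\ & v_2^* \end{pmatrix} \theta', \qquad \theta = \begin{pmatrix} 1 & \\ v_{12}'/v_1 & 1_{n-1} \end{pmatrix},
\end{equation*}
and choose $a = \theta \cdot \mathrm{diag}(\sqrt{v_1}, a_2^*)$ as in \eqref{eqn:a theta}. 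Since $\lie g^o(T,\bfv)(\varphi)$ is independent of the choice of square root of $v$, this substitution is legitimate, and it produces the convenient identity $(x_1, y) \cdot a = \bigl(\sqrt{v_1}\, x_1 + y v_{12}'/\sqrt{v_1},\ y a_2^*\bigr)$.

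With this choice in hand, I would apply \eqref{eqn:go xy decomp} termwise to each $\lie g^o(xa)$ with the shifted first argument $\sqrt{v_1}\,x_1 + y v_{12}'/\sqrt{v_1}$ and second argument $y a_2^*$. This expresses each summand as the sum of four pieces: the Kudla--Millson wedge $\varphi_{\KM}^o(\cdots) \wedge \lie g^o(y a_2^*)$, which assembles into $\lie b(T,\bfv)(\varphi)$; a $\partial$-exact piece $\partial \alpha$; a $\overline\partial$-exact piece $\overline\partial \beta$; and the $\gamma$-piece. For the $\gamma$-piece, I would invoke the observation, already noted in the excerpt, that
\begin{equation*}
\gamma\!\left(\sqrt{v_1}\, x_1 + \tfrac{y v_{12}'}{\sqrt{v_1}},\, y a_2^*\right) = \gamma(\sqrt{v_1}\, x_1, y),
\end{equation*}
which follows from the two-case definition of $\gamma$: the shift $y v_{12}'/\sqrt{v_1}$ lies in $\mathrm{span}(y)$, so it neither alters whether $x_1 \in \mathrm{span}(y)$ nor changes the delta current $\delta_{\bbD^+_y}$; and replacing $y$ by $y a_2^*$ does not change $\mathrm{span}(y)$ either. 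Summing yields $\lie a(T,\bfv)(\varphi)$.

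The main technical obstacle is step (1) above: one must confirm that after extracting the constant term at $\rho = 0$, the individual currents $\alpha(\cdot, \cdot)$ and $\beta(\cdot, \cdot)$ retain enough decay in $\bfx$ for the sums
\begin{equation*}
\eta_1 = \sum_{\bfx \in \Omega(T)} \varphi(\bfx)\, \alpha\!\left(\sqrt{v_1}\, x_1 + \tfrac{y v_{12}'}{\sqrt{v_1}},\ y a_2^*\right), \qquad \eta_2 = \sum_{\bfx \in \Omega(T)} \varphi(\bfx)\, \beta(\cdots)
\end{equation*}
to converge to $\Gamma$-invariant currents on $\bbD$. This is handled by the same Bismut-type asymptotic used earlier in the section to justify termination of $\rho$ at $0$, combined with the proof of \cite[Proposition 4.3]{GarciaSankaran}. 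Once convergence is secured, descending to $X$ gives $\lie g^o(T,\bfv)(\varphi) = \lie a(T,\bfv)(\varphi) + \lie b(T,\bfv)(\varphi) + \partial \eta_1 + \overline\partial \eta_2$, which is precisely the claimed congruence modulo $\mathrm{im}\,\partial + \mathrm{im}\,\overline\partial$.
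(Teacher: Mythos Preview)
Your proposal is correct and follows essentially the same approach as the paper's proof: you choose the same block-triangular square root $a = \theta \cdot \mathrm{diag}(\sqrt{v_1}, a_2^*)$, apply the local decomposition \eqref{eqn:go xy decomp} termwise, invoke \cite[Proposition~4.3]{GarciaSankaran} for the convergence of all the resulting sums, and simplify the $\gamma$-term in the same way. Your justification for the identity $\gamma(\sqrt{v_1}\,x_1 + y v_{12}'/\sqrt{v_1},\, y a_2^*) = \gamma(\sqrt{v_1}\,x_1, y)$ is in fact slightly more explicit than the paper's, which simply asserts it ``follows easily from the definitions.''
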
    
	Next, we define an $S(L^n)^{\vee}$-valued current $\psi(T,\bfv)$ as follows. 
For $\bfx \in \Omega(T)$, write $\sigma_1(\bfx) = x = (x_1, y) \in V_1 \oplus V_1^{n-1}$ as above; then
	\begin{align} \label{eqn:psi current def}
	\psi(T,\bfv)(\varphi)  \ &:= \ \sum_{\bfx \in \Omega(T)} \varphi(\bfx) \,		\varphi^o_{\KM}( \sqrt{v_1} x_1) \wedge \delta_{\bbD^+_y}  \wedge \Omega^{n-1-r(y)}
	\end{align}
	defines a $\Gamma$-equivariant current on $\mathbb D^+$, and hence descends to a current (also denoted $\psi(T,\bfv)$) on $X(\bbC)$.

	\begin{lemma}  \label{a and b ddc eqn}
		
		\begin{enumerate}[(i)]
		\item	Let   $\omega(T,\bfv)$ be the $T$th coefficient of the Kudla-Millson theta function, as in 	\eqref{eqn:KM theta q coeff}; then
			\begin{equation}
				\ddc \lie b(T,\bfv) \ = \ \omega(T,\bfv) - \psi(T,\bfv).
			\end{equation}
		\item  We have
		\begin{equation}
		\ddc \, \lie a(T,\bfv) + \delta_{Z(T)(\bbC)}\wedge \Omega^{n - r(T)} \ = \ \psi(T,\bfv),
		\end{equation}
		where $r(T) = \mathrm{rank}(T)$.  
	\end{enumerate}
		\begin{proof}
With $v = \sigma_1(\bfv)$ and taking $a = \theta \cdot \psm{ \sqrt{v_1} & \\ & a_2^*}$ as \eqref{eqn:a theta}, we have
\begin{align}
\omega(T,\bfv)(\varphi) \ 	&= \ \sum_{\bfx \in \Omega(T)} \varphi(\bfx) \,  \varphi^o_{\KM}(xa)  \\
&=  \ \sum_{\bfx \in \Omega(T)} \varphi(\bfx) \, \varphi^o_{\KM} \left( \sqrt{v_1} x_1 +  \frac{y \,  v_{12}'}{\sqrt{ v_1} } , \ y a_2^* \right) \\
&= \sum_{\bfx \in \Omega(T)} \varphi(\bfx) \, \varphi^o_{\KM} \left( \sqrt{v_1} x_1 +  \frac{y   v_{12}'}{\sqrt{ v_1} } \right) \wedge \varphi^o_{\KM} \left( y a_2^* \right)
\end{align}
for $\varphi \in S(L^n)$, where the last line follows from \cite[Theorem 5.2(i)]{KudlaMillsonIHES}.	Therefore, 
\begin{align}
\ddc \lie b(T, \bfv)(\varphi)  &=   \sum_{\bfx \in \Omega(T) } \varphi(\bfx) \,  \varphi^o_{\KM}  \left(\sqrt{v_1} x_1 +  \frac{y \,  v_{12}'}{\sqrt{ v_1} } \right)  \wedge  \ddc \lie g^o(y a_2^*) \\
&=  \sum_{\bfx \in \Omega(T) } \varphi(\bfx) \,  \varphi^o_{\KM}  \left(\sqrt{v_1} x_1 +  \frac{y \,  v_{12}'}{\sqrt{ v_1} } \right)  \wedge \Big\{  - \delta_{\bbD^+_y} \wedge \Omega^{n-1-r(y)}     + \varphi^o_{\KM}(ya_2^*) \Big\} \\
&=  - \sum_{\bfx \in \Omega(T) } \varphi(\bfx) \,  \varphi^o_{\KM}  \left(\sqrt{v_1} x_1 +  \frac{y \,  v_{12}'}{\sqrt{ v_1} } \right)  \wedge  \delta_{\bbD^+_y} \wedge \Omega^{n-1-r(y)}   + \omega(T,\bfv)(\varphi).
\end{align}
For $v \in V_1 $, the restriction  $\varphi^o_{\KM}(v) \wedge \delta_{\bbD^+_y}$ depends only on the orthogonal projection of $v$ onto $\mathrm{span}(y)^{\perp}$; see, for example,  \cite[Lemma 2.4]{GarciaSankaran}. In particular, 
	\begin{equation}
		\varphi^o_{\KM}  \left(\sqrt{v_1} x_1 +  \frac{y \,  v_{12}'}{\sqrt{ v_1} } \right)  \wedge  \delta_{\bbD^+_y}  = \varphi^o_{\KM}  \left(\sqrt{v_1} x_1   \right)  \wedge  \delta_{\bbD^+_y} .
	\end{equation}
	The first part of the lemma follows upon applying the definition of $\gamma(T,\bfv)$ in \eqref{eqn:psi current def}.
	
	The second part then follows from the first, together with \Cref{gT decomp} and \eqref{eqn:global green eqn}. 
		\end{proof}
	\end{lemma}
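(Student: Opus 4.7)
I would prove part (i) by direct computation using three ingredients: (a) $\varphi^o_{\KM}$ is closed, (b) the Kudla--Millson product formula $\varphi^o_{\KM}(x_1,\dots,x_n) = \varphi^o_{\KM}(x_1) \wedge \varphi^o_{\KM}(x_2,\dots,x_n)$ (from the cited Kudla--Millson paper), and (c) the pointwise Green equation \eqref{eqn:Green equation for g} for $\lie g^o(y a_2^*)$. Part (ii) will then follow formally by combining (i) with \Cref{gT decomp} and the global Green equation \eqref{eqn:global green eqn}.

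\textbf{Part (i).} Since $\varphi^o_{\KM}$ is a closed form, $\ddc$ passes through the first factor of $\lie b(T,\bfv)(\varphi)$ and lands on $\lie g^o(y a_2^*)$. Applying \eqref{eqn:Green equation for g} to the inner tuple $y a_2^*$, I obtain
\begin{equation*}
\ddc \lie b(T,\bfv)(\varphi) = \sum_{\bfx \in \Omega(T)} \varphi(\bfx)\, \varphi^o_{\KM}\!\left(\sqrt{v_1}x_1 + \tfrac{y\, v_{12}'}{\sqrt{v_1}}\right) \wedge \Bigl\{ \varphi^o_{\KM}(y a_2^*) \,-\, \delta_{\bbD^+_y} \wedge \Omega^{n-1-r(y)}\Bigr\}.
\end{equation*}
For the $\varphi^o_{\KM}(y a_2^*)$ summand, the product formula recombines the two factors into $\varphi^o_{\KM}(xa)$, where $a = \theta \cdot \mathrm{diag}(\sqrt{v_1}, a_2^*)$ as in \eqref{eqn:a theta}; this gives exactly $\omega(T,\bfv)(\varphi)$. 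For the $\delta_{\bbD^+_y}$ summand, the key observation is that $\varphi^o_{\KM}(v) \wedge \delta_{\bbD^+_y}$ depends only on the projection of $v$ onto $\mathrm{span}(y)^{\perp}$ (cf.\ \cite[Lemma 2.4]{GarciaSankaran}). Since $y\, v_{12}'/\sqrt{v_1}$ lies in $\mathrm{span}(y)$ and thus projects to zero, the factor simplifies to $\varphi^o_{\KM}(\sqrt{v_1}x_1) \wedge \delta_{\bbD^+_y} \wedge \Omega^{n-1-r(y)}$, whose sum over $\bfx$ is precisely $\psi(T,\bfv)(\varphi)$. This establishes (i).

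\textbf{Part (ii).} Apply $\ddc$ to the identity of \Cref{gT decomp}: since $\ddc$ annihilates $\mathrm{im}\,\partial + \mathrm{im}\,\overline\partial$ (because $\partial\overline\partial\partial = -\partial^2\overline\partial = 0$, and likewise for $\overline\partial$), we obtain the exact identity
\begin{equation*}
\ddc \lie g^o(T,\bfv) = \ddc \lie a(T,\bfv) + \ddc \lie b(T,\bfv).
\end{equation*}
Substituting the global Green equation \eqref{eqn:global green eqn} on the left and part (i) for $\ddc \lie b(T,\bfv)$, the $\omega(T,\bfv)$ terms cancel and rearranging yields $\ddc \lie a(T,\bfv) + \delta_{Z(T)(\bbC)} \wedge \Omega^{n-r(T)} = \psi(T,\bfv)$.

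\textbf{Main obstacle.} The only nontrivial piece is verifying in part (i) that the ``cross term'' $y v_{12}'/\sqrt{v_1}$ inside the argument of $\varphi^o_{\KM}$ truly disappears after wedging with $\delta_{\bbD^+_y}$. This is a purely local fact at a single point of $\bbD^+$, and reduces to the Kudla--Millson Schwartz form's known invariance property with respect to the component along $\mathrm{span}(y)$. Everything else is bookkeeping: choosing the factorization $v = \theta\, \mathrm{diag}(v_1, v_2^*)\, \theta'$ so that $a$ takes the block-lower-triangular form \eqref{eqn:a theta}, and then invoking the Kudla--Millson product formula on the resulting block decomposition.
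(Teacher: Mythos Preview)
Your proposal is correct and follows essentially the same route as the paper's proof: both use closedness of $\varphi^o_{\KM}$ to pass $\ddc$ onto $\lie g^o(y a_2^*)$, apply the pointwise Green equation \eqref{eqn:Green equation for g}, invoke the Kudla--Millson product formula to recognize $\omega(T,\bfv)$, and use the projection property (\cite[Lemma~2.4]{GarciaSankaran}) to drop the $y v_{12}'/\sqrt{v_1}$ term after wedging with $\delta_{\bbD^+_y}$; part (ii) is then deduced from \Cref{gT decomp} and \eqref{eqn:global green eqn} exactly as you describe. The only cosmetic difference is that the paper first expands $\omega(T,\bfv)$ via the product formula and then compares, whereas you compute $\ddc\lie b$ first and recombine --- the content is identical.
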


We finally arrived at the promised decomposition of $\widehat Z(T,\bfv)$. Recall that in defining the cycle $\widehat Z(T,\bfv)$ in \Cref{sec:prelim arith cycles}, we fixed a representative $(Z_0, g_0)$ for $\widehat \omega^{n- r(T)}$ such that $Z_0$ intersects $Z(T)$ properly. By the previous proposition,
\begin{equation}
	\ddc \left( \lie a(T,\bfv) +  g_0 \wedge \delta_{Z(T)(\bbC) } \right)  + \delta_{Z(T)\cap Z_0 (\bbC)} \ = \ \psi(T,\bfv);
\end{equation} we then obtain classes in $\ChowHat{n}_{\bbC}(X, \Dcur)\otimes_{\bbC}S(L^n)$ by setting
\begin{equation}
	\widehat A(T,\bfv) \ := \ \left( Z(T) \cdot Z_0, \, [ \psi(T,\bfv),  \, \lie a(T,\bfv) + g_0 \wedge  \delta_{Z(T)(\bbC)}   ] \right)  
\end{equation}
and
\begin{equation}
\widehat B(T,\bfv) \ := \ \left(0, \, [ \omega(T,\bfv) - \psi(T,\bfv) , \, \lie b(T,\bfv)] \right),
\end{equation}
so that
	\begin{equation}
		\widehat Z(T,\bfv) = \widehat A(T,\bfv) + \widehat B(T,\bfv) \ \in \ \ChowHat{n}_{\bbC}(X,\Dcur) \otimes_{\bbC} S(L)^{\vee}
	\end{equation}
	
	\begin{remark} \label{remark: A vanishes if not semi definite}
		Suppose $T = \psm{* & * \\ * &T_2}$ as above; if $T_2$ is not totally positive semidefinite, then $\bbD^+_y = \emptyset$ for any $\bfy \in \Omega(T_2)$, and hence $\widehat A(T,\bfv) = 0$. 
	\end{remark}

	\section{Modularity I}
	In this section, we establish the modularity of the generating series 
	\begin{equation}
		\widehat \phi_B (\bftau) = \sum_{T = \psm{* & * \\ * & T_2}} \, \widehat B(T,\bfv) \, q^T.	\end{equation}
	Note that 
	\begin{equation}
		\widehat B(T,\bfv) = (0, [\ddc \lie b(T,\bfv ), \lie b(T,\bfv)])  = a(\lie b(T,v));
	\end{equation}
	thus in light of \Cref{definition of modularity}, it suffices to establish the following theorem.
	\begin{theorem} \label{thm:B series}
	Fix $T_2 \in \Sym_{n-1}(F)$, and  consider the generating series
			\begin{equation} \label{eqn:B's gen series}
				 \xi(\bftau) = \sum_{T = \psm{* & * \\ * & T_2}} \lie b(T,\bfv) \, q^T ,  
			\end{equation}
Then $ \xi(\bftau)$ is an element of $A_{\kappa, T_2}(\rho_L^{\vee}; D^*(X))$, see \Cref{def:smooth currents}.

\begin{proof} We begin by showing the convergence of the series \eqref{eqn:B's gen series}. By definition,
	\begin{equation}
 		\sum_{T = \psm{T_1 &T_{12}\\ T_{12}' & T_2}} \, \lie b(T, \bfv)(\varphi) \,  q^T  
	=   \sum_{T} \sum_{(\bfx_1, \bfy) \in \Omega(T)}    \varphi(\bfx_1,\bfy) \,  \varphi_{\KM}^o \left( \sqrt{v_1} x_1+ \frac{y \cdot v'_{12} }{ \sqrt{v_1}} \right)\wedge \lie g^o(ya_2^*)  \ q^T \label{eqn:expansion of b sum}
	\end{equation}
	where $ x_1 = \sigma_1(\bfx_1)$ and $y = \sigma_1(\bfy)$, and here we are working with $\Gamma$-equivariant currents on $\bbD^+$.  For $v \in V_1$, consider the normalized Kudla-Millson form
	\begin{equation}
		\varphi_{\KM}(v) \ := \ e^{- 2 \pi \langle v, v \rangle} \, \varphi^o_{\KM}(v)
	\end{equation}
	which is a Schwartz form on $V_1$, valued in closed forms on $\bbD^+$;  more precisely, for any integer $k$ and relatively compact open subset $U \subset \bbD^+$, there exists a positive definite quadratic form $Q_U$ on $V_1$ such that
	\begin{equation} \label{eqn:varphi estimate}
			\| \varphi_{\KM}(v) \|_{k,\overline U} \ll  e^{ - Q_U(v)}
	\end{equation}
	where $\| \cdot \|_{k,\overline U}$ is an algebra seminorm measuring uniform convergence of all derivatives of order $\leq k$ on the space of smooth forms supported on $\overline U$, and the implied constant  depends on $k$ and $\overline U$.  
	Similarly, for $y \in V_1^{n-1}$, write 
	\begin{equation}
		\lie g (y) = e^{-2 \pi  \sum \langle y_i, y_i \rangle } \lie g^o(y); 
	\end{equation}
	if $\bbD^+_y \cap \overline U = \emptyset$, then $\lie g(y)$ is smooth on $U$, and the form $Q_U$ may be chosen so that 
	\begin{equation} \label{eqn:g(y) estimate}
		\| \lie g(y) \|_{k,\overline U}  \ \ll \ e^{- \sum_{i=1}^{n-1}  Q_U(y_i)} , \qquad y = (y_1, \dots, y_{n-1}) \in V_1^{n-1},
	\end{equation}
	see \cite[\S 2.1.5]{GarciaSankaran}. 
	
	Finally, for the remaining real embeddings $\sigma_2, \dots \sigma_d$, let $\varphi_{\infty_i} \in S(V_i^n)$ denote the standard Gaussian on the positive definite space $V_i = V \otimes_{F, \sigma_i} \bbR$, defined by $\varphi_{\infty_i}(x_1, \dots, x_n) = e^{ - 2 \pi \sum \langle x_i, x_i \rangle}$. 
	Then a brief calculation gives
	\begin{equation}
		\xi(\bftau)(\varphi) = \sum_{T} \sum_{(\bfx_1, \bfy) \in \Omega(T)} \varphi(\bfx_1,\bfy) \varphi_{\KM} \left( \sqrt{v_1} x_1+ \frac{y \cdot v'_{12} }{ \sqrt{v_1}} \right)\wedge \lie g(ya_2^*)   \cdot \prod_{i=2}^d \varphi_{\infty_i}( \sigma_i (\bfx_1, \bfy) a_i)  \ e( T \mathbf u )
	\end{equation}
	where we have chosen  matrices $a_i \in GL_n(\bbR)$ for $i = 2, \dots, d$, such that $\sigma_i(\bfv) = a_i \cdot a_i'$.  
	Let 
		\begin{equation}
			S_1 :=  \{ \bfy \in (L')^{n-1} \ | \langle \bfy, \bfy \rangle = T_{2}  \text{ and } \bbD^+_{y} \cap \overline U \neq \emptyset  \}
		\end{equation}
	which is a finite set, and let
		\begin{equation}
			S_2 :=  \{ \bfy \in (L')^{n-1} \ | \langle \bfy, \bfy \rangle = T_{2} \text{ and } \bbD^+_{y} \cap \overline U = \emptyset  \}.
		\end{equation}
		
	Using the estimates \eqref{eqn:varphi estimate} and \eqref{eqn:g(y) estimate}, and standard arguments for convergence of theta series, it follows that  the sum
		\begin{equation} \label{eqn:xi_0 sum}
		\sum_{T = \psm{* & * \\ * & T_2}} \sum_{\substack{(\bfx_1, \bfy ) \in \Omega(T) \\ \bfy \in S_2}}\varphi(\bfx_1,\bfy) \varphi_{\KM} \left( \sqrt{v_1} x_1+ \frac{y \cdot v'_{12} }{ \sqrt{v_1}} \right)\wedge \lie g(ya_2^*)    \prod_{i=2}^d \varphi_{\infty_i}( \sigma_i (\bfx_1, \bfy) a_i)  \ e( T \mathbf u )
		\end{equation}
		converges absolutely to a smooth form on $\bbH^d_n \times U$.  The (finitely many) remaining terms, corresponding to $\bfy \in S_1$, can be written as 
	\begin{equation}
		\sum_{\bfy \in S_1}   f_{\bfy}(\bftau)(\varphi)\wedge \lie g(y a_2^*)
	\end{equation}
	where, for any $\bfy \in V^{n-1}$ and $\varphi \in S(L^n)$, we set
	\begin{equation}
		f_{\bfy}(\bftau) (\varphi) \ = \  \sum_{ \bfx_1 \in V}  \varphi(\bfx_1, \bfy) \varphi_{\KM} \left( \sqrt{v_1} x_1+ \frac{y \cdot v'_{12} }{ \sqrt{v_1}} \right)   \prod_{i=2}^d \varphi_{\infty_i}( \sigma_i (\bfx_1, \bfy)a_i)  \ e( T(\bfx_1, \bfy)\mathbf u ),
	\end{equation}
	where $T(\bfx_1, \bfy) = \psm{ \langle \bfx_1, \bfx_1 \rangle & \langle \bfx_1, \bfy \rangle \\ \langle \bfx_1, \bfy \rangle' & \langle \bfy, \bfy \rangle } $.
	Again, the estimate \eqref{eqn:varphi estimate} shows that the series defining $f_{\bfy}(\bftau)$ converges absolutely to a smooth form on $\bbH_d^n \times \bbD^+$. Moreover, for a fixed $y \in V_1^{n-1}$ and any compactly supported test form $\alpha$ on $\bbD^+$, the value of the current $\lie g^o(y a_2^*)[\alpha]$  varies smoothly in the entries of $a_2^*$ (this fact follows easily from the discussion in \cite[\S 2.1.4]{GarciaSankaran}). 
	
	Taken together, the above considerations imply that the series $\xi(\bftau)(\varphi)$ converges absolutely to a $\Gamma$-invariant current on $\bbD^+$, and therefore descends to a current on $X$ that satisfies part (b) of \Cref{def:smooth currents} as $\bftau$ varies. In addition, this discussion shows that given any test form $\alpha$, the value of the current $\xi(\bftau)[\alpha]$ is smooth in $\bftau$. 
	
	It remains to show that $\xi(\bftau)$ transforms like a Jacobi modular form, i.e.\ is invariant under the slash operators \eqref{eqn:slash operators}. 
	Recall that   the form $\varphi_{\KM}$ is of weight $p/2 +1$; more precisely, let $\widetilde U(1) \subset \Sptilde_1(\bbR)$ denote the inverse image  of $U(1)$, which admits a genuine character $\chi$ whose square is the identity on $U(1)$. Then $\omega(\widetilde k) \varphi_{\KM} = (\chi(\widetilde k))^{p+2} \varphi_{\KM}$ for all $\widetilde k \in \widetilde U(1)$, where $\omega$ is the Weil representation attached to $V_1$, cf.\ \cite[Theorem 5.2]{KudlaMillsonIHES}.
	
	To show that $\xi(\bftau)$ transforms like a Jacobi form, note that  (by Vaserstein's theorem), every element of $\widetilde \Gamma^J$ can be written as a product of the following elements. 
	
	\begin{enumerate}[(i)]
		\item For each $i = 1, \dots, d$, let 
		\begin{equation} \label{eqn:Jacobi generator epsilon}
		\widetilde \epsilon(i) = (\widetilde \epsilon (i))_v \in  \prod_{v| \infty} \Sptilde_n(F_v)
		\end{equation} be the element whose $v$'th component is $(\Id, 1)$ if $v \neq \sigma_i$, and $(\Id, -1)$ if $v= \sigma_i$. 
		\item For $\mu, \lambda \in \mathrm{M}_{1, n-1}(\calO_F)$, 
		let 
		\begin{equation} \label{eqn:Jacobi generator gamma_lambda mu}
	\gamma_{\lambda, \mu} =\left( \begin{array}{cc|cc} 
											1 & 0 & 0 & \mu    \\
											\lambda' & 1_{n-1} & \mu' & 0 \\
											\hline
											0 & 0 & 1 & - \lambda \\ 
											0&0&0&1_{n-1}
											\end{array}  \right)  \in G^J(\calO_F).
		\end{equation}
		Let $\iota_F(\gamma_{\lambda,\mu}) \in \Sptilde_{n,\bbA}$ denote its image under the splitting \eqref{eqn:metaplectic canonical splitting}; we choose $\widetilde \gamma_{\lambda, \mu} \in \widetilde \Gamma^J$ to be the archimedean part of a representative $\iota_F(\gamma_{\lambda, \mu}) = \widetilde \gamma_{\lambda,\mu} \cdot \widetilde \gamma_f$.
		
		\item For $r \in \calO_F$, let 
		\begin{equation} \label{eqn:Jacobi generator gamma_r}
				\gamma_r = 	\left( \begin{array}{cc|cc} 
											1 & 0 & r & 0   \\
											0 & 1_{n-1} & 0 & 0 \\
											\hline
											0 & 0 & 1 & 0 \\ 
											0&0&0&1_{n-1}
											\end{array}  \right) \in G^J(\calO_F),
		\end{equation}
		and choose an element $\widetilde \gamma_r$  as the archimedean part of a representative of  $\iota_F(\gamma_r)$, as before.
		
		\item Finally, let 	
			\begin{equation} \label{eqn:Jacobi generator S}
			S = \left( \begin{array}{cc|cc} 
										0 & 0 & -1 & 0   \\
										0 & 1_{n-1} & 0 & 0 \\
										\hline
										1 & 0 & 0 & 0 \\ 
										0&0&0&1_{n-1}
										\end{array}  \right)
			\end{equation}
			and take $\widetilde S \in  \widetilde \Gamma^J$ to be the archimedean part of a representative of $\iota_F(S)$. 
	\end{enumerate}

			Now, rearranging the absolutely convergent sum \eqref{eqn:xi_0 sum}, we may write
			\begin{equation}
			\xi(\bftau) \ = \ \sum_{\bfy \in \Omega(T_2)} f_{\bfy}(\bftau) \wedge \lie g(y a_2^*).
			\end{equation}	
				Using the aforementioned generators, a direct computation shows that $\bfv_2^*  = \bfv_2 - \bfv'_{12} \bfv_{12} / \bfv_1$, viewed as a function on $\bbH^d_n$, is invariant under the action of $\widetilde \Gamma^J$; it therefore suffices to show that for a fixed $\bfy$, the $S(L^n)^{\vee}$-valued function $f_{\bfy}(\bftau)$ transforms like a Jacobi form. 
			
			It is a straightforward verification to check that $ f_{\bfy}(\bftau)$ is invariant under the action of $\widetilde \epsilon(i)$,   $\tilde \gamma_{\lambda, \mu}$, and $\tilde \gamma_r$. For example, the element $\widetilde \gamma_{\lambda, \mu}^{-1}$ acts on $S(L^n)$ by the formula
			\begin{equation}
				\rho(\widetilde \gamma_{\lambda, \mu}^{-1}) (\varphi) \left( \bfx_1, \bfy \right) \ = \ e \big( 2  \langle \bfx_1,\bfy \rangle \mu'  - \langle \bfy \lambda' , \bfy ' \rangle \mu'\big) \, \varphi( \bfx_1 -\bfy  \lambda', \, \bfy)
			\end{equation}
			and $\gamma_{\lambda, \mu} $ acts on $ \bbH_n^d$ by the formula
			\begin{equation}
				\gamma_{\lambda, \mu} \cdot \bftau =  \begin{pmatrix} \bftau_1 & \bftau_{12} + \bftau_1 \lambda + \mu \\ \bftau_{12}' + \bftau_1 \lambda' + \mu'  & \bftau_2 + \left( \lambda' \cdot \bftau_{12} + \bftau_{12}' \cdot \lambda \right) + \mu' \cdot \lambda \end{pmatrix} 
			\end{equation}
			where $\bftau= \psm{\bftau_1 & \bftau_{12} \\ \bftau_{12}' & \bftau_2}$. Moreover, writing $\widetilde \gamma_{\lambda, \mu} = (\gamma_{\lambda, \mu}, (\phi_v))_v$ as in \Cref{sec:modularity def}, we have $\prod \phi_v(\tau) = 1$.  For $\bfx_1 \in V$ and $\bfy \in V^{n-1}$, a direct computation gives
			\begin{equation}
			 	\mathrm{tr} \Big( T(\bfx_1, \bfy) \cdot \mathrm{Re}(\gamma_{\lambda, \mu} \cdot \bftau 
			 )\Big) =				  \mathrm{tr} \Big( T(\bfx_1 + \bfy \lambda', \bfy) \bfu \Big)	+   2  \langle \bfx_1, \bfy \rangle \mu'  +  \langle \bfy \lambda', \bfy  \rangle\mu';
			\end{equation}
			 therefore, applying the above identity and the change of variables $\bfx_1 \mapsto \bfx_1 - \bfy \cdot \lambda'$, we find
			\begin{align}
			f_{\bfy}(\gamma_{\lambda, \mu}\cdot \bftau)(\varphi) &= \sum_{\bfx_1 \in V} \varphi(\bfx_1, \bfy)	  \varphi_{\KM} \left( \sqrt{v_1}( x_1 + y \cdot \lambda')+ \frac{y \cdot v'_{12} }{ \sqrt{v_1}} \right)  \notag \\
			  & \qquad \qquad \times   \left\{\prod_{i=2}^d \varphi_{\infty_i} \left( \sigma_i (\bfx_1, \bfy)\psm{1 & \\ \lambda' & 1 } a_i  \right)  \right\} \, e \Big(T(\bfx_1, \bfy) \mathrm{Re}(\gamma_{\lambda, \mu} \bftau) \Big)
			    \\
			  &= \sum_{\bfx_1 \in V}  \left\{   \varphi(\bfx_1 - \bfy \bflambda') \, e(2 \langle \bfx_1, \bfy \rangle \mu ' - \langle \bfy \lambda ', \bfy \rangle \mu') \right\} \varphi_{\KM} \left( \sqrt{v_1} x_1+ \frac{y \cdot v'_{12} }{ \sqrt{v_1}} \right) \notag  \\
			  & \qquad \qquad \times \prod_{i=2}^d \varphi_{\infty_i}( \sigma_i (\bfx_1, \bfy)a_i)  \ e( T(\bfx_1, \bfy)\mathbf u ) \\ 
			  &= f_{\bfy}(\bftau) \left( \rho( \widetilde \gamma_{\lambda, \mu}) \varphi \right)
			\end{align}
		as required.
			
			As for $\widetilde S$, recall that  $\iota_F(S)$  acts on $S(V(\bbA)^n)$ by the partial Fourier transform in the first variable;  the desired invariance follows from Poisson summation on $\bfx_1$ and straightforward identities for the behaviour of the Fourier transform under translations and dilations.
	\end{proof}
	
	\end{theorem}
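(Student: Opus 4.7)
My plan is to treat convergence/smoothness (\Cref{def:smooth currents}) and the Jacobi transformation law (\Cref{def:Jacobi form}) separately, and to absorb the $q^T$ factor into Gaussians at the outset by writing $\varphi_{\KM}(v) = e^{-2\pi\langle v,v\rangle}\varphi_{\KM}^o(v)$ and $\lie g(y) = e^{-2\pi\sum \langle y_i,y_i\rangle}\lie g^o(y)$, together with the standard Gaussians $\varphi_{\infty_i}$ at the non-distinguished archimedean places. After this normalization, $\xi(\bftau)(\varphi)$ becomes a theta-type sum over $(\bfx_1,\bfy)$ with Schwartz decay in the summation variables and only the oscillatory factor $e(T\bfu)$ remaining in $\bftau$.

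For convergence I would fix a relatively compact open $U \subset \bbD^+$ and split $\Omega(T_2)$ into the finite set $S_1$ of $\bfy$ with $\bbD^+_{y} \cap \overline{U} \neq \emptyset$ and its complement $S_2$. For $\bfy \in S_2$ the current $\lie g(ya_2^*)$ restricts to a smooth form on $U$ with Gaussian decay in $y$, as recalled in \cite[\S 2.1.5]{GarciaSankaran}, so the corresponding subsum converges absolutely to a smooth form on compact subsets of $\bbH_n^d \times U$ by the standard theta-series argument. The contribution from $S_1$ has the shape $\sum_{\bfy \in S_1} f_{\bfy}(\bftau) \wedge \lie g(ya_2^*)$, where $f_{\bfy}(\bftau)$ is an absolutely convergent Schwartz sum over $\bfx_1 \in V$; pairing this wedge product against a compactly supported test form on $\bbD^+$ yields a function of $\bftau$ that is smooth, since the entries of $a_2^*$ depend smoothly on $\bftau$ and the pairing of a fixed current against a smooth family of test forms varies smoothly in the parameters. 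The seminorm condition in \Cref{def:smooth currents}(b) then follows from the same Schwartz estimates.

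For the Jacobi transformation law I appeal to Vaserstein-type generation of $\widetilde\Gamma^J$ to reduce to invariance under four families: the sign elements $\widetilde\epsilon(i)$, the Heisenberg translations $\widetilde\gamma_{\lambda,\mu}$, the upper-triangular $\widetilde\gamma_r$, and the inversion $\widetilde S$. A direct check shows that $\bfv_2^* = \bfv_2 - \bfv_{12}'\bfv_{12}/\bfv_1$ is $\widetilde\Gamma^J$-invariant, so it is enough to verify the transformation law for each $f_{\bfy}(\bftau)$ individually. Invariance under the sign and translation generators is a matching of exponential factors: one writes out the Weil representation action on $S(L^n)$, computes $\mathrm{tr}(T(\bfx_1,\bfy)\cdot \mathrm{Re}(\gamma_{\lambda,\mu}\bftau))$, and applies the change of variables $\bfx_1 \mapsto \bfx_1 - \bfy\lambda'$. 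The main obstacle is the inversion generator $\widetilde S$: here $\iota_F(S)$ acts by partial Fourier transform in the first variable, and invariance of $f_{\bfy}$ requires Poisson summation on $V$ in the $\bfx_1$-variable combined with the transformation law of the Kudla-Millson Gaussian factor under $\bftau_1 \mapsto -1/\bftau_1$, as supplied by \cite[Theorem 5.2]{KudlaMillsonIHES}.
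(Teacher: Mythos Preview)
Your proposal is correct and follows essentially the same approach as the paper: the same Gaussian normalizations and $S_1/S_2$ split for convergence, and the same reduction via Vaserstein to the four generator families with the $\bfv_2^*$-invariance observation reducing matters to $f_{\bfy}$. The verification strategy for each generator, including Poisson summation for $\widetilde S$, matches the paper's argument.
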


	\section{Modularity II}
	In this section, we prove the modularity of the generating series $\widehat \phi_A(\bftau)$. 	By \Cref{remark: A vanishes if not semi definite}, we only need to consider totally positive semidefinite matrices $T_2$; assume that this is case throughout this section.

	We begin by fixing an element $\bfy = (\bfy_1, \dots, \bfy_{n-1}) \in \Omega(T_2)$, and setting $y = \sigma_1(\bfy)$. Let 
	\begin{equation}
		U_{\bfy} = \mathrm{span}(\bfy_1, \dots, \bfy_{n-1}) \subset V, 
	\end{equation}
	so that $U_{\bfy}$ is totally positive definite. Let 
	\begin{equation}
		\Lambda_{\bfy} := U_{\bfy} \cap L, \qquad \text{and} \qquad  \Lambda_{\bfy}^{\perp} := U_{\bfy}^{\perp} \cap L
	\end{equation}
	and set
	\begin{equation}
		\Lambda := \Lambda_{\bfy} \oplus \Lambda_{\bfy}^{\perp}  \subset L,
	\end{equation}
	so that 
	\begin{equation}
		\Lambda \subset L \subset L' \subset \Lambda'.
	\end{equation}
	In light of the definition \eqref{eqn:intro S(L) defn}, we have a natural inclusion $S(L^n) \to S(\Lambda^n)$, and the composition
	\begin{equation}  \label{eqn:lattice embedding}
	 S(L^n) \to S(\Lambda^n) \stackrel{\sim}{\to} S(\Lambda_{\bfy}^n) \otimes S((\Lambda_{\bfy}^{\perp})^{n}).
	\end{equation}
is equivariant for the action of $\widetilde \Gamma^J$, via $\rho_L$ on the left hand side, and via $\rho_{\Lambda_{\bfy}} \otimes \rho_{\Lambda_{\bfy}^{\perp}}$ on the right; this latter fact can be deduced from  explicit formulas for the Weil representation, cf.\ \cite[Proposition II.4.3]{KudlaCastle}.

  Note that $U_{\bfy}^{\perp}$ is a quadratic space of signature $((p',2 ), (p'+2, 0 ), \dots (p'+2, 0)) $ with $p'  = p- \mathrm{rank}(T_2)$, so the constructions in \Cref{sec:Prelims} apply equally well in this case. In particular, let $X_{\bfy}(\bbC) = \Gamma_{\bfy} \big\backslash \bbD^+_{y}$. Then for  $m  \in F$ and $\bfv_1 \in (F \otimes_{\bbR} {\bbR})_{\gg0}$, we have a special divisor
	\begin{equation} \label{eqn:genus one Zy def}
		\widehat Z_{U_{\bfy}^{\perp}}(m, \bfv_1)  
\in \ChowHat{1}_{\bbC}(X_{\bfy}) \otimes S(\Lambda_{\bfy}^{\perp})^{\vee},
	\end{equation}
	where we introduce  the subscript $U_{\bfy}^{\perp}$ in the notation to emphasize the underlying quadratic space being considered.
	
	Let 
	\begin{equation}
		\pi_{\bfy} \colon X_{\bfy} \to X
	\end{equation} 
	denote the natural embedding, whose image is the cycle $Z(\bfy)$ of codimension $\rank(T_2)$, and define a class 
		\begin{equation}
			\widehat Z_{\bfy}(m, \bfv_1) \in  \ChowHat{{\rk(T_2)+1}}(X, \Dcur) \otimes_{\bbC} S((\Lambda_{\bfy}^{\perp})^n) 
		\end{equation}
		as follows: suppose $\varphi \in S((\Lambda_{\bfy}^{\perp})^n) $ is of the form $\varphi_1 \otimes \varphi_2$ with $\varphi_1 \in S(\Lambda_{\bfy}^{\perp}) $ and $\varphi_2 \in S((\Lambda_{\bfy}^{\perp})^{n-1})$.
	 Then, using the pushforward $\pi_{\bfy,*}$,  set 
	\begin{equation}
	\widehat Z_{\bfy}(m, \bfv_1)(\varphi_1 \otimes \varphi_2) := \varphi_2(0) \cdot	\pi_{\bfy,*}  \left(   \widehat Z_{U_{\bfy}^{\perp}}(m, \bfv_1,\varphi_1)\right) \ \in \ \ChowHat{\rk(T_2) + 1}(X, \Dcur),
	\end{equation}
	and extend this definition to arbitrary $\varphi$ by linearity. Observe that the pushforward is an element of  $\ChowHat{n}_{\bbC}(X, \Dcur)$;  the existence of   pushforward maps along arbitrary proper morphisms, which are not available in general for the Gillet-Soul\'e Chow groups, are an essential feature of the extended version, \cite[\S 6.2]{BurgosKramerKuhn}.
	
	Finally, for $\bftau = \psm{ \bftau_1 & \bftau_{12} \\ \bftau_{12}' & \bftau_2}  \in \bbH_d^n$, we define the generating series
	\begin{equation}
		\widehat \phi_{\bfy}(\bftau_1)  \ := \ \sum_{m \in F} \widehat Z_{\bfy}(m, \bfv_1) \,  q_1^m
	\end{equation}
	where $\bftau_1 \in \bbH_1^d$ with $\bfv_1 = \mathrm{Im}(\bftau_1)$, and $q_1^m = e(m \bftau_1)$.

	There is  also  a classical  theta function attached to the totally positive definite space $U_{\bfy}$, defined as follows: let $\varphi \in S(\Lambda_{\bfy}^n)$ and suppose $\varphi = \varphi_1 \otimes \varphi_2$ with $\varphi_1 \in S(\Lambda_{\bfy})$ and $\varphi_2 \in S(\Lambda_{\bfy}^{n-1})$. Then we set
	\begin{equation}
					\theta_{\bfy} (\bftau)(\varphi_1 \otimes \varphi_2) \ := \  \varphi_2(\bfy) \sum_{\lambda \in U_{\bfy}} \varphi_1(\lambda) \ e\left( \langle \lambda, \lambda \rangle \bftau_1 + 2 \langle \lambda, \bfy \rangle \bftau_{12}'   \right) \, e(T_2\cdot \bftau_2),
	\end{equation}
	and again, extend to all $\varphi \in S(\Lambda_{\bfy}^n)$ by linearity.
 	It is well-known that $\theta_{\bfy}(\bftau)$ is a holomorphic Jacobi modular form of weight $\dim U_{\bfy} / 2  = \rk(T_2)/2$ and index $T_2$, see e.g. \cite[\S II.7]{EichlerZagier}.

 The Fourier expansion of $	\theta_{\bfy} (\bftau)(\varphi)$ can be written, for $\varphi = \varphi_1 \otimes \varphi_2$ as above, as
 	\begin{equation}
	 	\theta_{\bfy}(\bftau)(\varphi_1\otimes \varphi_2) \ =\  \varphi_2(\bfy)\sum_{T = \psm{* & * \\ * & T_2}}  r_{\bfy}(T, \varphi_1) \ q^T, 
 	\end{equation}
 	where $r_{\bfy}(T) \in S(\Lambda_{\bfy})^{\vee}$ is given by the formula
 	\begin{equation}
	 	r_{\bfy} \left( \psm{T_1 & T_{12} \\ T_{12}' & T_2 }, \varphi_1 \right)  = \sum_{\substack{\lambda \in U_{\bfy} \\ \langle \lambda, \lambda \rangle = T_1 \\ \langle \lambda, \bfy \rangle = T_{12}} } \, \varphi_1 (\lambda)
 	\end{equation}
 	
 	Finally,  note that given $T$ as above, we must have either $\rank(T) = \rank(T_2) + 1$, or $\rank(T) = \rank(T_2)$.
 	\begin{lemma} \label{r vanishing non-degenerate}
	 		 Suppose $\rank(T) = \rank(T_2) + 1$. Then for any $\bfy \in \Omega(T_2)$, we have $r_{\bfy}(T) = 0.$
	 		 
 		\begin{proof}
 			Suppose $r_{\bfy}(T) \neq 0$; then there exists a tuple $(\lambda, \bfy) \in \Omega(T)$  with $ \mathrm{span}(\lambda, \bfy) = \mathrm{span} (\bfy)$, which  contradicts the assumption on $\rank(T)$.

 		\end{proof}
 	\end{lemma}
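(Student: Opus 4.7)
The plan is to prove the statement by contradiction, exploiting the fact that the sum defining $r_{\bfy}$ ranges only over vectors $\lambda$ lying inside the subspace $U_{\bfy}$ itself. First I would assume $r_{\bfy}(T) \neq 0$; unpacking the defining formula, this forces the existence of some $\lambda \in U_{\bfy}$ whose inner products satisfy $\langle \lambda, \lambda \rangle = T_1$ and $\langle \lambda, \bfy \rangle = T_{12}$. Combined with $\langle \bfy, \bfy \rangle = T_2$, the tuple $(\lambda, \bfy_1, \ldots, \bfy_{n-1})$ then lies in $\Omega(T)$.

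The key observation is that $\lambda$ already belongs to $U_{\bfy} = \mathrm{span}(\bfy_1, \ldots, \bfy_{n-1})$, so adjoining it to $\bfy$ cannot enlarge the span: we have $\mathrm{span}(\lambda, \bfy) = U_{\bfy}$. Since $U_{\bfy}$ is totally positive definite (as recorded in the setup preceding the lemma), the rank of the Gram matrix of any tuple of vectors in $U_{\bfy}$ coincides with the dimension of their span. Applying this to $(\lambda, \bfy)$ yields
\[
\rank(T) \;=\; \dim \mathrm{span}(\lambda, \bfy) \;=\; \dim U_{\bfy} \;=\; \rank(T_2),
\]
which contradicts the hypothesis $\rank(T) = \rank(T_2) + 1$.

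I do not anticipate any genuine obstacle here: the argument is purely linear-algebraic and rests on two ingredients, namely the explicit form of $r_{\bfy}$ (which restricts summation to $\lambda \in U_{\bfy}$) and the identity between the rank of a Gram matrix and the dimension of the span of its defining vectors in a positive definite quadratic space. The only point that needs to be invoked carefully is the positive definiteness of $U_{\bfy}$, which ensures that the degenerate case, in which the Gram matrix could drop rank relative to the span, does not occur.
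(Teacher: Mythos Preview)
Your proposal is correct and follows essentially the same approach as the paper's proof: both argue by contradiction, noting that if $r_{\bfy}(T)\neq 0$ then some $\lambda\in U_{\bfy}$ gives $(\lambda,\bfy)\in\Omega(T)$ with $\mathrm{span}(\lambda,\bfy)=\mathrm{span}(\bfy)$, forcing $\rank(T)=\rank(T_2)$. Your version simply makes explicit the role of positive definiteness in equating Gram-matrix rank with span dimension, a detail the paper leaves implicit.
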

	
	\begin{proposition}  \label{PhiA Prop}
		As formal generating series, we have
		\begin{equation} \label{eqn:PhiA Prop main eqn}
			\widehat{ \phi}_A(\bftau) = 	\sum_{T = \psm{ * & * \\ * & T_2}} \widehat A(T, \bfv) \, q^T = \sum_{\substack{ \bfy \in \Omega(T_2) \\ \mod{\Gamma}} }  \widehat{ \phi}_{\bfy}(\bftau)  \cdot \widehat{ \omega}^{n - r(T_2) - 1} \otimes \theta_{\bfy}(\bftau) ,
		\end{equation}
		where
		\begin{equation} 
			\widehat{ \phi}_{\bfy}(\bftau)  \cdot \widehat{ \omega}^{n - r(T_2) - 1} \ := \  \sum_{m \in F} \widehat Z_{\bfy}(m, \bfv_1) \cdot \widehat \omega^{n-r(T_2) - 1} \, q_1^m;
		\end{equation}
	here, we view the right hand side of \eqref{eqn:PhiA Prop main eqn} as valued in $S(L^n)^{\vee}$ by dualizing  \eqref{eqn:lattice embedding}.

		\begin{proof}
			By linearity, it suffices to evaluate both sides of the desired relation at a Schwartz function $\varphi \in S(L^n)$ of the form $\varphi = \varphi_1 \otimes \varphi_2$ for $\varphi_1 \in S(L) $ and $\varphi_2 \in S(L^{n-1})$.

			Then we may write
			\begin{align}
				Z(T)(\varphi_1 \otimes \varphi_2)  \ &= \ \sum_{\substack{\bfx \in \Omega(T) \\ \text{mod } \Gamma}}  (\varphi_1 \otimes \varphi_2) (\bfx)  \, Z(\bfx)  \\
				&= \sum_{\substack{\bfy \in \Omega(T_2)   \\ \text{mod } \Gamma}} \varphi_2(\bfy) \sum_{\substack{ \bfx_1 \in \Omega(T_1) \\ \langle \bfx_1, \bfy \rangle = T_{12} \\ \text{mod } \Gamma_{\bfy} }} \varphi_1(\bfx_1) \Gamma_{(\bfx_1, \bfy)} \big\backslash  \bbD_{(\bfx_1, \bfy)}.
			\end{align}
			We may further assume that 
			\begin{equation}
				\varphi_1 = \varphi_1' \otimes \varphi_1'' \in S(U_{\bfy}) \otimes S(U_{\bfy}^{\perp}) \qquad  \text{ and  } \qquad	\varphi_2  = \varphi_2' \otimes \varphi_2'' \in S(\Lambda_{\bfy}^{n-1}) \otimes S((\Lambda_{\bfy}^{\perp})^{n-1});
			\end{equation}
			in this case, $\varphi_2(\bfy) = \varphi_2'(\bfy) \varphi_2''(0)$. 
			
			For a vector $\bfx_1 \in V$ as above, write its orthogonal decomposition as
			\begin{equation}
				\bfx_1 = \bfx_1'  \ + \ \bfx_1''  \ \in \ U_{\bfy} \oplus U_{\bfy}^{\perp},
			\end{equation}
			and note that $\bbD^+_{(x_1, y)} = \bbD^+_{(x_1'', y)}$, where $x_1 = \sigma_1(\bfx_1)$, etc., and $\Gamma_{(\bfx_1, \bfy)} = \Gamma_{(\bfx_1'', \bfy)}$.

			Thus, decomposing the sum on $\bfx_1$ as above and writing $T =\psm{T_1 & T_{12} \\ T_{12}' & T_2}$, we have
			\begin{align} \notag
				Z(T)& (\varphi_1\otimes \varphi_2)  \\
				&=  \sum_{\substack{ \bfy \in \Omega(T_2) \\ \text{mod } {\Gamma}}} \varphi_2(\bfy) \, \sum_{m \in F} 
				\left(	
					\sum_{\substack{ 
								\bfx_1'' \in 	U_{\bfy}^{\perp} \\
								\langle \bfx_1'', \bfx_1'' \rangle = m \\
								\text{mod } \Gamma_{\bfy}
								}} 
					\varphi_1''(\bfx_1'')
					\Gamma_{(\bfx_1'', \bfy)} \big\backslash  	\bbD_{(\bfx_1'', \bfy)}.
				\right) 
				\cdot
				\left(	
					\sum_{\substack{
								\bfx_1' \in U_{\bfy} \\
								\langle \bfx_1' , \bfx_1' \rangle = T_1 - m \\
								\langle \bfx_1', \bfy \rangle = T_{12}
					}
				}
				\varphi'_1(\bfx_1')
				\right)  \notag \\
				&=  \sum_{\substack{ \bfy \in \Omega(T_2) \\ \text{mod } {\Gamma}}} \varphi_2(\bfy) \, \sum_{m \in F} 
				\left(	
				\sum_{\substack{ 
						\bfx_1'' \in 	U_{\bfy}^{\perp} \\
						\langle \bfx_1'', \bfx_1'' \rangle = m \\
						\text{mod } \Gamma_{\bfy}
				}} 
				\varphi_1''(\bfx_1'')
				\Gamma_{(\bfx_1'', \bfy)} \big\backslash  	\bbD_{(\bfx_1'', \bfy)}.
				\right) 
				\cdot
				r_{\bfy}\left( \psm{T_1 - m & T_{12} \\ T_{12'} & T_2}, \varphi_1' \right). 
			\end{align}
			which we may rewrite as
			\begin{align} \label{eqn:Z(T) cycle relation}
				Z(T)\left( \varphi_1 \otimes \varphi_2 \right) 
				&=
				\sum_{
					\substack{ \bfy \in \Omega(T_2)  \\ \text{mod } \Gamma}
				}
				\varphi_2''(0)\varphi_2'(\bfy)
				\sum_{m } 
				\pi_{\bfy, *} \left( Z_{U_{\bfy}^{\perp}} (m) (\varphi_1'') \right) \cdot  			
					r\left( \psm{T_1 - m & T_{12} \\ T_{12'} & T_2}, \varphi_1' \right)   \\
					&= 	\sum_{
						\substack{ \bfy \in \Omega(T_2)  \\ \text{mod } \Gamma}
					}
					\sum_{m } 
				Z_{\bfy} (m) (\varphi_1'' \otimes \varphi_2'')  \cdot  			
				 \left\{ \varphi_2'(\bfy) \, 	r\left( \psm{T_1 - m & T_{12} \\ T_{12'} & T_2}, \varphi_1' \right)  \right\}
			\end{align}
			where in the second line,  $Z_{\bfy}(m)$ denotes the $S((\Lambda_{\bfy}^{\perp})^n)^{\vee}$-valued cycle 
			\begin{equation}
			Z_{\bfy}(m) \colon \varphi''  \mapsto \varphi_2''(0) \,  \pi_{\bfy,*}   Z_{U_{\bfy}^{\perp} } (m,\varphi_1'').
			\end{equation}
			
			Now suppose that $\rk(T) = \rk(T_2)+1$. Then, by \Cref{r vanishing non-degenerate}, the term $m=0$ does not contribute to \eqref{eqn:Z(T) cycle relation}, and so  all the terms  $Z_{U_{\bfy}^{\perp}}(m)$ that do contribute are divisors. To incorporate Green currents in the discussion, note that, at the level of arithmetic Chow groups, the pushforward is given by the formula
			\begin{align}
			\widehat	Z_{\bfy}(m, \bfv_1)(\varphi'') &= 	\varphi_2''(0) \ \pi_{\bfy,*} \widehat Z_{U^{\perp}_{\bfy} } (m, \bfv_1, \varphi_1'') \\
			&=  \left( \pi_{\bfy, *}  Z_{U^{\perp}_{\bfy}}(m,\varphi_1''), \ \left[ \omega_{U_{\bfy}^{\perp}}(m, \bfv_1,\varphi_1'')\wedge \delta_{Z(\bfy)}, \lie g^o_{U_{\bfy}^{\perp}}(m,\bfv_1,\varphi_1'') \wedge \delta_{Z(\bfy)} \right] \right),
			\end{align}
 			where, as before, we use the subscript $U_{\bfy}^{\perp}$ to denote objects defined with respect to that space.
 			
 			This may be rewritten as
 			\begin{align}
				\widehat Z_{\bfy}(m, \bfv_1)(\varphi'') &= \widehat Z_{\bfy}(m)^{\can}(\varphi'')   \notag \\
				& \qquad + \varphi''_2(0)\left(0, \left[ \omega_{U^{\perp}_{\bfy}}(m, \bfv_1, \varphi''_1) \wedge \delta_{Z(\bfy)} - \delta_{Z_{\bfy}(m) } , \, \lie g^o_{U_{\bfy}^{\perp}}(m,\bfv_1,\varphi''_1) \wedge \delta_{Z(\bfy)} \right] \right) \label{eqn:Z(y) pushforward}
 			\end{align}
 			where $\widehat Z_{\bfy}(m)^{\can} = (Z_{\bfy}(m), [ \delta_{Z_{\bfy}(m) }, 0])$ is the canonical class associated to $Z_{\bfy}(m)$. 
 			Thus, 
 			\begin{equation}
 					\widehat Z_{\bfy}(m,\bfv_1) \cdot \widehat \omega ^{n-\rk(T)}   = \widehat Z_{\bfy}(m)^{\can} \cdot \widehat \omega^{n-\rk(T)}   + \left( 0, \, [\beta_{\bfy}(m, \bfv_1) , \alpha_{\bfy}(m, \bfv_1)]\right)
 			\end{equation}
 			where  $\alpha_{\bfy}(m,\bfv_1)$ and $\beta_{\bfy}(m,\bfv_1)$ are  $S((\Lambda_{\bfy}^{\perp})^n)^{\vee}$-valued  currents defined by
 			\begin{equation}
 			\alpha_{\bfy}(m, \bfv_1)(\varphi'') \ =   \ \varphi''_2(0) \  \lie g^o_{U_{\bfy}^{\perp}}(m, \bfv_1,\varphi''_1) \wedge \delta_{Z(\bfy) } \wedge \Omega^{n-\rk(T)}
 			\end{equation}
 			and
 			\begin{equation}
 				\beta_{\bfy}(m, \bfv_1)(\varphi'') \  = \  \varphi''_2(0) \  \omega_{U^{\perp}_{\bfy} }(m,\bfv_1,\varphi_1'') \wedge \delta_{Z(\bfy)} \wedge \Omega^{n-\rk(T)} - \delta_{Z_{\bfy}(m)(\varphi'') } \wedge \Omega^{n-\rk(T)}
 			\end{equation}
 			where $\varphi'' = \varphi''_1 \otimes \varphi''_2$ as before. 
 			
 			Turning to the class $\widehat A(T,\bfv)$, it can be readily verified  that
 			\begin{equation}
	 			\widehat A(T,\bfv) = \widehat{Z(T)}{}^{\can} \cdot \widehat{ \omega}^{n-\rk(T)} \ + \ \left( 0, [ \psi(T,\bfv) - \delta_{Z(T)} \wedge \Omega^{n-\rk(T)}, \lie a(T,\bfv)] \right)
 			\end{equation}
 			where the currents $\lie a(T,\bfv)$ and $\psi(T,\bfv)$ are defined in \eqref{eqn:lie a def} and  \eqref{eqn:psi current def}, respectively. 
 			Now, by the same argument as in \eqref{eqn:Z(T) cycle relation}, and under the assumption $\rank(T) = \rank(T_2)+1$, we have (as a $\Gamma$-invariant current on $\bbD$)
 			\begin{align}
 			\lie a(T,\bfv)(\varphi_1 \otimes \varphi_2) \ &= \ \sum_{\bfy \in \Omega(T_2)} \varphi_2(\bfy) \sum_{\substack{ \bfx_1 \in \Omega(T_1)  \\ \langle \bfx_1, \bfy \rangle = T_{12} }} \varphi_1(\bfx_1) \lie g^o(\sqrt{v_1} x_1 ) \wedge \delta_{\bbD^+_{y}} \wedge \Omega^{n-r(T)}  \\
 			&=  \sum_{\bfy \in \Omega(T_2)} \varphi_2(\bfy) \cdot \sum_{m \in F}	
 			\left( \sum_{\substack{ 
 					\bfx_1'' \in 	U_{\bfy}^{\perp} \\
 					\langle \bfx_1'', \bfx_1'' \rangle = m 
 			}} 
 			\varphi_1''(\bfx_1'') \,  \lie g^o(\sqrt{v_1} x_1'') \wedge \delta_{\bbD^+_{y}} \wedge \Omega^{n-r(T)} \right) \\
 			& \qquad \qquad  \qquad \qquad \times 	r\left( \psm{T_1 - m & T_{12} \\ T_{12'} & T_2} , \varphi_1' \right) ,
 			\end{align}
 			where we use the fact that $\lie g^o(\sqrt{v_1} x_1) \wedge \delta_{\bbD^+_y}$ only depends on the orthogonal projection $x_1''$ of $x_1$ onto $U_{y}^{\perp} = \sigma_1(U_{\bfy}^{\perp})$. Thus, as $S(L^n)^{\vee}$-valued currents on $X$, we obtain the identity
 			\begin{equation}
 				\lie a(T,\bfv)(\varphi_1 \otimes \varphi_2) = \sum_{\bfy \text{ mod } \Gamma}  \sum_{m \in F}  \alpha_{\bfy}(m,\bfv_1)(\varphi_1''\otimes \varphi_2'') \cdot \left\{ \varphi_2'(\bfy) \,  	r_{\bfy}\left( \psm{T_1 - m & T_{12} \\ T_{12'} & T_2} , \varphi_1'\right) \right\} 
 			\end{equation}
 			with $\varphi_i =\varphi_i' \otimes \varphi_i ''$ as above. 
 			
 			A similar argument gives 
 			\begin{equation}
 			\psi(T,\bfv)(\varphi) - \delta_{Z(T)(\varphi)} \wedge \Omega^{n-\rk(T)}  = \sum_{\bfy \text{ mod } \Gamma}    \sum_{m \in F} \beta_{\bfy}(m, \bfv_1)(\varphi_1'' \otimes \varphi_2'') \cdot \left\{\varphi_2'(\bfy) \, 	r_{\bfy} \left( \psm{T_1 - m & T_{12} \\ T_{12'} & T_2}, \varphi_1' \right) \right\},
 			\end{equation}
 			and so in total, we have
 			\begin{equation} \label{eqn:final AHat eqn}
	 			\widehat A(T,\bfv)(\varphi_1\otimes \varphi_2) = \sum_{\bfy \text{ mod } \Gamma}   \sum_m \widehat Z_{\bfy}(m, \bfv_1) (\varphi_1'' \otimes \varphi_2'')\cdot \widehat{\omega}^{n - \rk(T_2) - 1} \cdot 	 \left\{ 	\varphi_2'(\bfy) \, r_{\bfy}\left( \psm{T_1 - m & T_{12} \\ T_{12'} & T_2} \right) (\varphi_1')\right\}
 			\end{equation} 
 			whenever $\rank(T) = \rank(T_2) + 1$. 
 	
 			Now suppose $\rank(T) = \rank(T_2)$. Then for any tuple $(\bfx_1, \bfy) \in \Omega(T)$, we must have $\bfx_1 \in U_{\bfy}$, and in particular, the only terms contributing to the right hand side of \eqref{eqn:final AHat eqn} are those with $m=0$. On the other hand, we have
 			\begin{equation}
	 			\lie a(T,\bfv) = 0, \qquad \psi(T,\bfv) = \delta_{Z(T)} \wedge \Omega^{n-\rk(T)},
 			\end{equation}
 			and hence
 		\begin{equation}
	 		\widehat A(T,\bfv) = \widehat{Z(T)}{}^{\can} \cdot \widehat \omega^{n-\rk(T)};
 		\end{equation}
 			with these observations, it follows easily from unwinding definitions that \eqref{eqn:final AHat eqn} continues to hold in this case.
 			
 			Finally, the statement in the proposition follows by observing that the $T$'th $q$ coefficient on the right hand side of \eqref{eqn:PhiA Prop main eqn} is precisely the right hand side of \eqref{eqn:final AHat eqn}. 

		\end{proof}
	
	\end{proposition}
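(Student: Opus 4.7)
The plan is to match $q$-coefficients on both sides and verify, for each $T = \psm{* & * \\ * & T_2}$, the equation
\begin{equation}
\widehat A(T,\bfv)(\varphi) \ = \ \sum_{\substack{\bfy \in \Omega(T_2) \\ \text{mod } \Gamma}}  \sum_{m \in F}  \, \widehat Z_{\bfy}(m, \bfv_1) \cdot \widehat \omega^{n - r(T_2) - 1} \cdot r_{\bfy}\!\left( \psm{T_1 - m & T_{12} \\ T_{12}' & T_2} \right) \cdot (\text{theta weight})
\end{equation}
upon testing against any $\varphi \in S(L^n)$. By linearity and the inclusion \eqref{eqn:lattice embedding}, it suffices to take $\varphi = (\varphi_1' \otimes \varphi_1'') \otimes (\varphi_2' \otimes \varphi_2'')$ adapted to the orthogonal decomposition $V = U_{\bfy} \oplus U_{\bfy}^{\perp}$ for each $\bfy \in \Omega(T_2)$.

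First, I would decompose the underlying complex cycle $Z(T, \varphi)$: for $\bfx = (\bfx_1, \bfy) \in \Omega(T)$, split $\bfx_1 = \bfx_1' + \bfx_1''$ with $\bfx_1' \in U_{\bfy}$ and $\bfx_1'' \in U_{\bfy}^{\perp}$. Since $\bbD^+_{(x_1, y)} = \bbD^+_{(x_1'', y)}$ and $\Gamma_{(\bfx_1, \bfy)} = \Gamma_{(\bfx_1'', \bfy)}$, the inner sum on $\bfx_1'$ collects into the theta coefficient $r_{\bfy}(\psm{T_1 - m & T_{12} \\ T_{12}' & T_2}, \varphi_1')$, while the sum on $\bfx_1''$ (of a given norm $m$) produces, after pushforward along $\pi_{\bfy} \colon X_{\bfy} \to X$, the divisor $\pi_{\bfy,*} Z_{U_{\bfy}^{\perp}}(m, \varphi_1'')$. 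This realizes the purely geometric content of the identity.

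Next, I would run the same splitting for the currents $\lie a(T, \bfv)$ and $\psi(T,\bfv)$ from \eqref{eqn:lie a def} and \eqref{eqn:psi current def}. The key observation is that $\lie g^o(\sqrt{v_1}\, x_1) \wedge \delta_{\bbD^+_y}$ and $\varphi^o_{\KM}(\sqrt{v_1}\, x_1) \wedge \delta_{\bbD^+_y}$ depend only on the orthogonal projection $x_1''$ onto $U_{\bfy}^{\perp}$; so the sum over $\bfx_1$ factors exactly as for the cycle, yielding $S((\Lambda_{\bfy}^{\perp})^n)^{\vee}$-valued currents $\alpha_{\bfy}(m, \bfv_1)$ and $\beta_{\bfy}(m, \bfv_1)$ on $X_{\bfy}$ pushed forward to $X$ and weighted by the theta coefficients $r_{\bfy}$. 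Matching these against the formula \eqref{eqn:Z(y) pushforward} for the pushforward of $\widehat Z_{U_{\bfy}^{\perp}}(m, \bfv_1)$ should present $\widehat A(T,\bfv)$ as the claimed combination of $\widehat Z_{\bfy}(m,\bfv_1) \cdot \widehat \omega^{n - r(T_2) - 1}$.

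Finally, I would split into two cases depending on $\rank(T)$. When $\rank(T) = \rank(T_2) + 1$, Lemma \ref{r vanishing non-degenerate} kills the $m=0$ term on the right; the divisors $Z_{U_{\bfy}^{\perp}}(m)$ are genuine and the pushforward formula with Green currents applies directly. When $\rank(T) = \rank(T_2)$, one has $\lie a(T,\bfv) = 0$ and $\psi(T,\bfv) = \delta_{Z(T)} \wedge \Omega^{n - \rk(T)}$, so $\widehat A(T,\bfv) = \widehat{Z(T)}{}^{\can} \cdot \widehat \omega^{n - \rk(T)}$, and only the $m=0$ term survives on the right --- the identity reduces to the geometric cycle decomposition. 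The main obstacle I anticipate is bookkeeping: tracking how the pushforward of a Green current through $\pi_{\bfy}$, multiplied by $\widehat \omega^{n - r(T_2) - 1}$, produces a class in $\ChowHat{n}(X, \Dcur)$ that precisely matches the $(\lie a, \psi)$-data at the level of currents. This is exactly where working in the Burgos--Kramer--K\"uhn framework $\ChowHat{n}(X, \Dcur)$ is essential, since the Gillet--Soul\'e group does not admit proper pushforwards of such classes in general.
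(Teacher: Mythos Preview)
Your proposal is correct and follows essentially the same approach as the paper's proof: reduce to tensor-factorable test functions, decompose $\bfx_1$ orthogonally along $U_{\bfy} \oplus U_{\bfy}^{\perp}$ to separate the theta coefficient $r_{\bfy}$ from the pushforward of the genus-one data on $X_{\bfy}$, run the same splitting through the currents $\lie a(T,\bfv)$ and $\psi(T,\bfv)$ via the projection-dependence of $\lie g^o$ and $\varphi^o_{\KM}$ restricted to $\bbD^+_y$, and then handle the two rank cases exactly as you describe. The paper carries out precisely these steps, with the same auxiliary currents $\alpha_{\bfy}$, $\beta_{\bfy}$ and the same use of \Cref{r vanishing non-degenerate} to eliminate $m=0$ in the higher-rank case.
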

	
\begin{corollary} \label{thm:A series}
		The series $\widehat \phi_A(\bftau)$ is a Jacobi modular form of weight $\kappa := (p+2)/2$ and index $T_2$, in the sense of \Cref{definition of modularity}. 
		
	\begin{proof}
		Fix $\bfy \in \Omega(T_2)$. By \Cref{genus one modularity}, applied to the space $U_{\bfy}^{\perp}$, there exist finitely many $\widehat z_{\bfy,1}, \dots, \widehat z_{\bfy,r}\in \ChowHat{1}_{\bbC}(X_{\bfy})$, finitely many (elliptic) forms $f_{\bfy,1}, \dots, f_{\bfy, r} \in A_{\kappa}(\rho^{\vee}_{\Lambda^{\perp}_{\bfy}})$ and an element $g_{\bfy} \in A_{\kappa}(\rho^{\vee}_{\Lambda_{\bfy}^{\perp}}; D^*(X))$ such that the identity 
		\begin{equation}
			\sum_{m \in F} 		\widehat Z_{U_{\bfy}^{\perp}}(m, \bfv_1)   q^m = \sum_{i = 1}^r f_{\bfy, i}(\bftau_1) \widehat z_{\bfy,i}\ + \ a( g_{\bfy} (\bftau_1))
		\end{equation}
		holds at the level of $q$-coefficients; here $\bftau_1 \in \bbH_1^d$ and $\bfv_1  = \mathrm{Im}(\bftau_1)$. Moreover, from the proof of \Cref{genus one modularity}, we see that $g_{\bfy}(\tau)$ is smooth on $X$. 
		
		Therefore, applying \Cref{PhiA Prop} and unwinding definitions, we obtain the identity
		\begin{equation}
			\widehat \phi_{A}(\bftau) \ = \ \sum_{\substack{ \bfy \in \Omega(T_2) \\ \text{mod } {\Gamma} }} \  \sum_{i=1}^r  \left( F_{\bfy, i} (\bftau) \otimes \theta_{\bfy}(\bftau) \right) \widehat Z_{\bfy, i}  \ + \ a\left(  \left( G_{\bfy}(\bftau) \otimes \theta_{\bfy}(\bftau) \right) \wedge \delta_{Z(\bfy)} \wedge \Omega^{n-\rank(T_2) - 1} \right) 
		\end{equation}
		of formal generating series, where 
		\begin{equation}
			\widehat Z_{\bfy, i} \ := \ \pi_{\bfy, *} \left( \widehat z_{\bfy,i} \right)  \cdot \widehat \omega^{n - \rk(T_2) - 1} \ \in \ \ChowHat{n}_{\bbC}(X, \Dcur),
		\end{equation}
		and we promote the elliptic forms $f_{\bfy, i}$  and  $g_{\bfy}$ to $S((\Lambda_{\bfy}^{\perp})^n)^{\vee}$-valued functions by setting 
		\begin{equation}
			F_{\bfy, i}(\bftau)(\varphi) \ := \ \varphi_2(0) \cdot  f_{\bfy, i}(\bftau_1)(\varphi_1)   , \qquad  G_{\bfy}(\bftau) = \varphi_2(0)  \cdot g_{\bfy}(\bftau_1)(\varphi_1)   
		\end{equation}
		for $\varphi = \varphi_1 \otimes \varphi_2 \in S(\Lambda_{\bfy}^{\perp}) \otimes S((\Lambda_{\bfy}^{\perp})^{n-1}) $ and $\bftau = \psm{\bftau_1 & \bftau_{12} \\ \bftau_{12}' & \bftau_2}$. 
		
		It remains to show that  $F_{\bfy,i}(\bftau)\otimes \theta_{\bfy}(\bftau)$ and  $G_{\bfy}(\tau)\otimes \theta_{\bfy}(\bftau)$ are invariant under the slash operators \eqref{eqn:slash operators}  for elements of $\widetilde \Gamma^J$;  this can  be  verified directly using the generators  \eqref{eqn:Jacobi generator epsilon} -- \eqref{eqn:Jacobi generator S}, the modularity in genus one of  $f_{\bfy, i}$ and $g_{\bfy}$, and explicit formulas for the Weil representation (as in e.g. \cite{KudlaCastle}).
			
	\end{proof}
\end{corollary}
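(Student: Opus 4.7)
The plan is to combine Proposition \ref{PhiA Prop}, which decomposes $\widehat\phi_A(\bftau)$ into a finite sum indexed by $\Gamma$-orbits of $\bfy \in \Omega(T_2)$, with the genus-one result \Cref{genus one modularity} applied to each of the smaller quadratic spaces $U_{\bfy}^{\perp}$. Each term in the decomposition is a product of a genus-one generating series of special divisors on the sub-Shimura variety $X_{\bfy}$ (pushed forward to $X$ and multiplied by the appropriate power of $\widehat\omega$) with the classical Jacobi theta series $\theta_{\bfy}(\bftau)$. Since $\theta_{\bfy}$ is already known to be a holomorphic Jacobi form of weight $\rk(T_2)/2$ and index $T_2$ and the genus-one factor is a Hilbert modular form of weight $(p-\rk(T_2))/2 + 1$, their product lands in the correct weight $\kappa = p/2 + 1$.

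Concretely, for each fixed $\bfy \in \Omega(T_2)$, I would invoke \Cref{genus one modularity} for the anisotropic quadratic space $U_{\bfy}^{\perp}$ (of signature $((p',2),(p'+2,0),\dots)$ with $p' = p - \rk(T_2)$) to produce finitely many classes $\widehat z_{\bfy,i} \in \ChowHat{1}_{\bbC}(X_{\bfy})$, Hilbert modular forms $f_{\bfy,i} \in A_{\kappa}(\rho^{\vee}_{\Lambda_{\bfy}^{\perp}})$, and a smooth current-valued modular form $g_{\bfy} \in A_{\kappa}(\rho^{\vee}_{\Lambda_{\bfy}^{\perp}}; D^*(X_{\bfy}))$ whose $m$-th Fourier coefficient equals $\widehat Z_{U_{\bfy}^{\perp}}(m,\bfv_1)$ up to an $a(\cdot)$ term. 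Pushing forward along $\pi_{\bfy}$ and intersecting with $\widehat\omega^{n-\rk(T_2)-1}$ gives classes $\widehat Z_{\bfy,i} \in \ChowHat{n}_{\bbC}(X,\Dcur)$, and promoting $f_{\bfy,i}$ and $g_{\bfy}$ to functions valued in $S((\Lambda_{\bfy}^{\perp})^n)^{\vee}$ via the partial evaluation $\varphi_2 \mapsto \varphi_2(0)$ turns them into candidates for Jacobi-type data. Multiplying by $\theta_{\bfy}(\bftau)$ and summing over a set of orbit representatives $\bfy$ then yields the putative modular expression for $\widehat\phi_A(\bftau)$; the identity of formal $q$-expansions with $\widehat\phi_A(\bftau)$ is exactly the content of Proposition \ref{PhiA Prop}.

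The remaining verification is that the products $F_{\bfy,i}(\bftau) \otimes \theta_{\bfy}(\bftau)$ and $G_{\bfy}(\bftau) \otimes \theta_{\bfy}(\bftau)$, viewed as $S(L^n)^{\vee}$-valued functions via the dual of \eqref{eqn:lattice embedding}, are invariant under the slash operators $|_{\kappa}$ for all $\widetilde\gamma \in \widetilde\Gamma^J$. I would check this on the generators \eqref{eqn:Jacobi generator epsilon}--\eqref{eqn:Jacobi generator S}: the elements $\widetilde\epsilon(i)$ act via straightforward sign computations; the elements $\widetilde\gamma_{\lambda,\mu}$ and $\widetilde\gamma_r$ act by translations of $\bftau$ that can be absorbed by index-preserving shifts of the summation variable in $\theta_{\bfy}$ combined with the shift-equivariance of $f_{\bfy,i}$ and $g_{\bfy}$ in the genus-one variable $\bftau_1$. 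The equivariance of the lattice embedding \eqref{eqn:lattice embedding} under $\rho_L$ versus $\rho_{\Lambda_{\bfy}} \otimes \rho_{\Lambda_{\bfy}^{\perp}}$ is the bookkeeping device that makes these factorizations compatible with the Weil representation.

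The main obstacle will be the invariance under $\widetilde S$, which acts by a partial Fourier transform in the first symplectic coordinate. Here the inversion formula $\widetilde S \cdot \bftau_1 = -1/\bftau_1$ together with the Weil representation cocycle means that one must simultaneously check that $\theta_{\bfy}$ transforms correctly under the $U_{\bfy}$-piece and that $f_{\bfy,i}$ and $g_{\bfy}$ supply the correct automorphy factor on the $U_{\bfy}^{\perp}$-piece; the point is that the automorphy factors combine to give weight $\rk(T_2)/2 + (p'+2)/2 = \kappa$ and that the two partial Weil representations reassemble into $\rho_L$ via \eqref{eqn:lattice embedding}. This is a finite, explicit computation that reduces to standard identities for the Weil representation of $\SL_2$ acting on $S(U_{\bfy})$ versus $S(U_{\bfy}^{\perp})$; once carried out, the decomposition supplied by Proposition \ref{PhiA Prop} exhibits $\widehat\phi_A(\bftau)$ in the form required by \Cref{definition of modularity}, completing the proof.
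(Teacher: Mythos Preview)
Your proposal is correct and follows essentially the same approach as the paper: apply \Cref{genus one modularity} to each $U_{\bfy}^{\perp}$ to obtain the data $(\widehat z_{\bfy,i}, f_{\bfy,i}, g_{\bfy})$, push forward and multiply by $\widehat\omega^{n-\rk(T_2)-1}$ and $\theta_{\bfy}(\bftau)$, invoke \Cref{PhiA Prop} for the identity of formal $q$-expansions, and then verify the Jacobi transformation law on the generators \eqref{eqn:Jacobi generator epsilon}--\eqref{eqn:Jacobi generator S} using the $\widetilde\Gamma^J$-equivariance of \eqref{eqn:lattice embedding}. Your more detailed breakdown of the generator-by-generator check (in particular isolating $\widetilde S$ as the nontrivial case and noting the weight additivity $\rk(T_2)/2 + (p'+2)/2 = \kappa$) is exactly what the paper leaves to the reader.
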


	\bibliographystyle{alpha}
	\bibliography{refs}

\end{document}